\newtheorem{theorem}{Theorem}[section] %
\newtheorem{corollary}[theorem]{Corollary} %
\newtheorem{lemma}[theorem]{Lemma} %
{\theoremstyle{remark} %
  \newtheorem{remark}[theorem]{Remark}} %
{\theoremstyle{definition} %
  \newtheorem{definition}[theorem]{Definition} %
  \newtheorem{example}[theorem]{Example} %
}
\newtheorem*{A}{Theorem A}
\newtheorem*{B}{Corollary B}
\newcommand{\PP}[0]{\ensuremath{\mathbb{P}}}
\newcommand{\CC}[0]{\ensuremath{\mathbb{C}}}
\newcommand{\ZZ}[0]{\ensuremath{\mathbb{Z}}}
\newcommand{\PGL}[0]{\ensuremath{\operatorname{PGL}}}
\newcommand{\GL}[0]{\ensuremath{\operatorname{GL}}}
\newcommand{\SL}[0]{\ensuremath{\operatorname{SL}}}
\begin{document}

\title[]{Invariant Smooth Quartic Surfaces by all Finite Primitive Groups of $\operatorname{PGL}_4(\mathbb{C})$}

\author{Jose Avila}
\address{Departamento de Matem\'aticas, Universidad Del Valle, Cali, Colombia}  \email{jose.avila@correounivalle.edu.co}

\author{Guillermo Ortiz} %
\address{Departamento de Matem\'aticas, Universidad del Valle, Cali, Colombia} %
\email{guillermo.ortiz@correounivalle.edu.co}

\author{Sergio Troncoso}
\address{Departamento de Matem\'aticas, Universidad T\'ecnica
  Fe\-de\-ri\-co San\-ta Ma\-r\'\i a, Valpara\'\i
  so, Chile}  
\email{sergio.troncosoi@usm.cl}

\thanks{{\it 2020 Mathematics Subject
    Classification}:14-04, and
14J50.\\
  \mbox{\hspace{11pt}}{\it Key words}: Automorphism groups, Quartic Surface, Smooth Hypersurfaces}

\begin{abstract}For each finite primitive subgroup $G$ of $\PGL_4(\CC)$, we find all the smooth
$G$-invariant quartic surfaces. We also find all the faithful representations in $\PGL_4(\CC)$ of the smooth
quartic $G$-invariant surfaces by the groups:
$\mathfrak{A}_5,\mathfrak{S}_5, \operatorname{PSL_2(\mathbb{F}_7)},\mathfrak{A}_6,\mathbb{Z}_2^4\rtimes\mathbb{Z}_5$ and $\mathbb{Z}_2^4\rtimes D_{10}$. The primitive representation of these groups are precisely the subgroups of $\operatorname{PGL}_4(\mathbb{C})$ for which $\mathbb{P}^3$ is not $G$-super rigid.
As a byproduct, we show that the smooth quartic surface with the biggest group of projective automorphism  is given by  $\{ x_0^4 + x_1^4 + x_2^4 + x_3^4 + 12 x_0 x_1 x_2 x_3= 0\}$ (unique up to projective equivalence). 

\end{abstract}

\maketitle

\setcounter{tocdepth}{1}

\tableofcontents

\section{Introduction}

It is well known that the simplest algebraic varieties are the hypersurfaces. However, it is a good place to test and find examples of general theories. In the history of algebraic geometry, there has been a lot of research around hypersurfaces, since the classical works of Cayley in the 19th century, to the more recent works about rigidity presented by J. Kollár \cite{kollar2019rigidity}, where the author synthesized a century of mathematical effort  which start at  times of Fano, and show that any  smooth hypersurfaces of dimension $n$ with degree $d=n+1$ are super rigid.
Concerning automorphisms of hypersurfaces, a remarkable
result due to Matsumura and Monsky, see \cite[Theo. 2]{matsumura1963automorphisms}, states that for hypersurfaces of degree $d$ and dimension $n$, except for  $n=2$ and $d=4$, the group of projective automorphisms of a hypersurface $X$, denoted by $\operatorname{PAut}(X)$, and the automorphisms of the hypersurface $X$, denoted by $\operatorname{Aut}(X)$, are the same group. More recently, V. González-Aguilera, A. Liendo, and P. Montero showed that for $d\geq 3$, $n\geq1$ and $(n,d)\neq (2,3),(3,4)$, the automorphism group of every smooth hypersurface of dimension $n-1$ and degree $d$ in $\mathbb{P}^n$ is liftable, (i.e., we can realize $\operatorname{Aut}(X)$ as a subgroup of $\operatorname{GL}(V)$) if and only if the number of variables and the degree are relative prime, i.e., $\gcd(n+1,d)=1$, see, e.g., \cite{gonzalez2020liftability}. On the other hand, we have that even for the most basic K3 surfaces $S$, the quartic surfaces, the groups $\operatorname{PAut}(S)$, and  $\operatorname{Aut}(S)$ are different, see, for instance,  \cite{matsumura1963automorphisms,gonzalez2020liftability}. Furthermore, to study the group $\operatorname{PAut}(S)$ it is necessary to go through the groups $G$ which are subgroups of $\operatorname{PGL}_4(\mathbb{C})$ and not just subgroups of $\operatorname{GL}_4(\mathbb{C})$. 

Recall that, a variety $X\subseteq \mathbb{P}^n$ is $G$-invariant for a subgroup $G<\operatorname{PGL_n(\mathbb{C})}$  if
$G \subset \operatorname{PAut}(X)$.
To find $G$-invariant smooth quartic surfaces in $\mathbb{P}^3$, we need to have a list of possible subgroups of $\operatorname{PGL_4(\mathbb{C})}$. The finite subgroups of $\operatorname{PGL_4(\mathbb{C})}$ were classified  by      H. Blichfeld  in his keystone book \cite[Chap. VII]{FiniteCollineationGroups}. In \cite[Appendix A]{Ivan2019}, the authors' present diagrams of all finite primitive groups of $\operatorname{PGL_4(\mathbb{C})}$ including their inclusions, see the Figures ~\ref{fig:1° - 12°},\ref{fig:13° - 21°}, \ref{fig:(A), (C) - (F), (G) y (K)}, and \ref{fig:(B) y (H)}. 

The main goal of our work is to find all the smooth quartic surfaces in $\mathbb{P}^3$ invariant by the finite primitive subgroups $G$ of $\operatorname{Aut(\mathbb{P}^3)}$. Also, we found all the smooth quartic surfaces invariant by the finite primitive subgroups of $\operatorname{PGL}_4(\mathbb{C})$ for which $\mathbb{P}^3$ is not $G$-super rigid. These groups are isomorphic to $\mathfrak{A}_5,\mathfrak{S}_5,\operatorname{ PSL_2(\mathbb{F}_7)},\mathfrak{A}_6, \mathbb{Z}_2^4\rtimes\mathbb{Z}_5$, and $\mathbb{Z}_2^4\rtimes D_{10}$, 
see \cite[Theo.1.3]{Ivan2019}.

Recall that $\mathbb{P}^3$ is said to be $G$-birational super-rigid, if:
\begin{enumerate}[noitemsep,topsep=0pt]
    \item There are no other $G$-Fano varieties $G$-equivariantly birational to $\mathbb{P}^3$ ,
    \item There are no $G$-equivariant birational map from $\mathbb{P}^3$ to a variety $X$ such that there is a
(non-birational) $G$-equivariant epimorphism $\pi :X \to Z$ where $\operatorname{dim}(X)> \operatorname{dim}(Z)\neq 0$
and whose general fiber is an irreducible rationally connected variety, and
\item  $\operatorname{Bir}^G(\mathbb{P}^3)=\operatorname{Aut}^G(\mathbb{P}^3)$, where $\operatorname{Bir}^G(\mathbb{P}^3)$ is the normalizer of the group $G$ in $\operatorname{Bir}(\mathbb{P}^3)$. Similarly, $\operatorname{Aut}^G(\mathbb{P}^3)$.
For a detailed explanation, see \cite[Def. 3.1.1]{cheltsov2015cremona}.
\end{enumerate}

The classification of groups of automorphisms of hypersurfaces is at an early state. Some results are known; for instance, in \cite{dolgachev2019automorphisms}, the authors   classify all possible automorphism groups of smooth cubic surfaces over an algebraically closed field of arbitrary characteristics.
For  non-singular cubic hypersurfaces in $\mathbb{P}^4$ over algebraically closed fields of characteristic $0$, the work is completely done, see \cite{wei2020automorphism}.  A partial classification of automorphisms  of cubic fourfolds is given in \cite{laza2022automorphisms}. Oguiso and Yu in \cite{oguiso2015automorphism} studied the automorphisms of the smooth quintic threefold.

In the case of quartic surfaces, there is no complete description. However, there are several works in that direction.
The quartic surfaces $G$-invariants by the alternating group $\mathfrak{A}_5$, icosahedron group, were found in \cite{faina2016s5} using projective representation theory. By different approaches, such surfaces were also found  by I. Dolgachev in \cite{dolgachev2018quartic}. The quartic surfaces $G$-invariant
by $G\in\{\mathfrak{A}_6,\mathfrak{S}_5\}$, were studied in \cite{faina2016s5}. In particular, the authors show that there are no $\mathfrak{A}_6$-invariants quartic surfaces. In \cite{faina2019Zp}, the authors determine for every $p \geq 5$,  all $\mathbb{Z}_p$-invariant non-singular quartic surfaces in $\mathbb{P}^3$ over  an algebraically closed field of characteristic zero.  The invariant  quartic surfaces by the Heisenberg group, $H\cong\mathbb{Z}_2^4$, were studied by D. Eklund  in \cite{eklund2018curves}.
In that work, the author finds all the $H$-invariant quartic surfaces and  shows that for a  generic of surfaces, its Picard number is 16.

Here we tackle the problem using the techniques presented in \cite{faina2016s5, faina2019Zp}. We develop, use, and implement a computational method, see  Appendix \ref{Program}. Using that method, we retrieve the results presented in \cite{faina2016s5} and in \cite{dolgachev2018quartic}. Beyond the previous results, we find the smooth invariant quartic surfaces for each one of the primitive subgroups of $\operatorname{Aut}(\mathbb{P}^3)\cong \operatorname{PGL}_4(\mathbb{C})$.

  The computational method presented in  Appendix \ref{Program} can be used to resolve the general version of the problem of finding the basis of the space of invariant polynomials. Nevertheless, the big obstacle is to understanding the finite subgroups $G$ of $\operatorname{PGL_n(\mathbb{C})}$, for $n\geq 4$, and to describe any projective faithful representation for each group. In particular, for Calabi-Yau hypersurfaces, it is more complicated because, in these cases, the degree and number of variables are not relatively primes. For that reason, the results presented by V. González-Aguilera, A. Liendo, and P. Montero \cite[Theo. 3.5]{gonzalez2020liftability} can not be applied.

The main result obtained in this work is the description of all the smooth quartic surfaces invariant for primitive groups of $\operatorname{PGL}_4(\mathbb{C})$.  Yet another important motivation for studying quartic surfaces is a longstanding conjecture by Burnside. 
Burnside established in a  conjectural way
that every smooth quartic surface has a group of projective  automorphisms of order bounded above by $2^4\cdot 120$, and that if a projective surface has a group of projective automorphisms of order $1920=2^4\cdot 120$, it must be projective equivalent to $\{ x_0^4 + x_1^4 + x_2^4 + x_3^4 + 12 x_0 x_1 x_2 x_3=0\}$, see \cite[Sect. 272, Ex. 6]{burnside1911theory}.  Kond\={o}  in \cite{kondo1999maximum}  showed that the maximum order  group of automorphisms of a K3 surface is $3840$. The maximum order is achieved only for the Kummer surface $\operatorname{Kum}(E_i\times E_i)$, where $E_i$ is the elliptic curve $\mathbb{C}/\langle 1, i \rangle$, and the group acting is a $\mathbb{Z}_4$ extension of the Mathieu Group $M_{20}\cong \mathbb{Z}_2^4\rtimes \mathfrak{A}_5$. Recently, Bonnafé and Sarti proved in \cite{bonnafe2019k3} that there are only two K3 surfaces admitting an action of  a group extension $G$ of $M_{20}$ with $|G|=1920$. Finally, by \cite[Remark 3.3]{bonnafe2019k3}, the Kond\={o} Kummer is not a quartic surface. On the other hand, in the same work is proved that their  examples (unique) are the smooth quartic surfaces $\{x_0^4+x_1^4+x_2^4+x_3^4+12x_0x_1x_2x_3=0\}$ and the Kummer surface $\operatorname{Kum}(E_{\tau}\times E_{2\tau})$, where $\tau=\frac{-1+i\sqrt{5}}{2}$ which can not be a quartic surface.

\begin{A}
The smooth quartic surfaces invariant by at least one finite primitive group of $\operatorname{PGL}_4(\mathbb{C})$ are listed as follows:

\begin{enumerate}
    \item  The   invariant  quartic smooth surface under the primitive action of the groups \\ $\mathbb{Z}_2^4.\mathfrak{S}_5, \mathbb{Z}_2^4.\mathfrak{A}_5, \mathbb{Z}_2^4.(\mathbb{Z}_5\rtimes\mathbb{Z}_4),\mathbb{Z}_4^2\rtimes D_{10},  $ and $\mathbb{Z}_4^2\rtimes \mathbb{Z}_5$ is 
    \vspace{0.2mm}
    $$ \{x_0^4+x_1^4+x_2^4+x_3^4+6 \left(x_0^2 x_1^2-x_2^2 x_1^2+x_3^2 x_1^2+x_0^2 x_2^2-x_0^2 x_3^2+x_2^2 x_3^2\right)=0\}.$$
    
    Moreover, the surface presented above is projectively equivalent to the Burnside quartic surface 
    \vspace{0.1mm}
    $$\{ x_0^4 + x_1^4 + x_2^4 + x_3^4 + 12 x_0 x_1 x_2 x_3=0\}.$$

    \item All the  quartic surfaces  invariant by the primitive representations in $\operatorname{PGL}_4(\mathbb{C})$ of $\mathfrak{A}_5$ are in the pencils of quartics
 
 \begin{enumerate}
     \item  The Pencil
     \[
\begin{split}
     \Big\{\lambda_0\Big( &  \frac{x_0^4}{2 \sqrt{3}}+\frac{1}{3} x_1 x_0^3+x_2 x_3 x_0^2+\frac{1}{3} x_1^3 x_0+\frac{1}{3} \sqrt{2} x_2^3 x_0+\sqrt{\frac{2}{3}} x_3^3 x_0 \\ & +\sqrt{\frac{2}{3}} x_1 x_2^3-\frac{1}{3} \sqrt{2} x_1 x_3^3+x_1^2 x_2 x_3-\frac{x_1^4}{2 \sqrt{3}}\Big)+ \\
    \lambda_1\Big(  & \frac{x_0^4}{6}+\frac{x_1 x_0^3}{\sqrt{3}}-\frac{1}{2} x_1^2 x_0^2+\sqrt{3} x_2 x_3 x_0^2+\frac{1}{3} \sqrt{2} x_3^3 x_0+x_1 x_2 x_3 x_0 \\ & +\frac{2}{3} \sqrt{2} x_1 x_2^3-\sqrt{\frac{2}{3}} x_1 x_3^3-\frac{1}{2} x_2^2 x_3^2-\frac{x_1^4}{3}\Big)=0 \Big\},
\end{split}
\]   
 where the singular  fibers are finitely many and at least  must include the fibers over the following points: 
 \[
\begin{split}
    \left[ - 2 \sqrt{2} : \sqrt{i \sqrt{15} -1} \right],\, \left[ - 2 \sqrt{2} : \sqrt{- i \sqrt{15} -1} \right], \,  \left[ 47 :8 \left(1-4 \sqrt{3}\right) \right]  \\
    \left[ 47 : - 8 \left(1 + 4 \sqrt{3}\right) \right], \, \left[ 4 : -\sqrt{3}+i \sqrt{5} \right], \;  \; \left[ 4 : -\sqrt{3} - i \sqrt{5} \right],   \text{ and } [1 : - \sqrt{3}]. 
\end{split}
\]

 \item The pencil
 \[
\begin{split}
    \Big\{\lambda_0& \Big( \frac{x_0^4}{\sqrt{15}}-\frac{x_2 x_3 x_0^2}{\sqrt{15}}-\frac{x_1^2 x_0^2}{2 \sqrt{15}}-\frac{1}{3} x_1^3 x_0-\frac{1}{3} x_2^3 x_0-\frac{1}{3} x_3^3 x_0+x_1 x_2 x_3 x_0  \\ & +\frac{1}{6} \sqrt{\frac{5}{3}} x_1^4-\frac{1}{3} \sqrt{\frac{5}{3}} x_1 x_2^3-\frac{1}{3} \sqrt{\frac{5}{3}} x_1 x_3^3+\frac{1}{2} \sqrt{\frac{5}{3}} x_2^2 x_3^2 \Big) + \\
   \lambda_1 & \left(\frac{x_0^4}{4}+\frac{1}{2} x_1^2 x_0^2+x_2 x_3 x_0^2+\frac{x_1^4}{4}+x_2^2 x_3^2+x_1^2 x_2 x_3 \right)=0 \Big\},
\end{split}
\]


where the singular ones are the fibers on the five points of $\mathbb{P}^{1}_{\lambda_0,\lambda_1}$
$$[0:1],\,[1:0],\, [\sqrt{15} : -4],\, [2 \sqrt{15} : -5], \, \text{and} \, [6 \sqrt{15} : 1] .$$

 \end{enumerate}

\item The smooth quartic surfaces  invariant by the primitive representations in $\operatorname{PGL}_4(\mathbb{C})$ of $\mathfrak{S}_5$  are in the pencil of quartics (b), plus the surfaces
 \[
\begin{split}
   \Big\{  &-\frac{x_0^4}{12}+\frac{x_1 x_0^3}{2 \sqrt{3}}-\frac{1}{2} x_1^2 x_0^2+\frac{1}{2} \sqrt{3} x_2 x_3 x_0^2+x_1 x_2 x_3 x_0-\frac{x_3^3 x_0}{3 \sqrt{2}}-\frac{x_1^3 x_0}{2 \sqrt{3}}-\frac{x_2^3 x_0}{\sqrt{6}} \\ & +\frac{x_1 x_2^3}{3 \sqrt{2}}-\frac{1}{2} x_2^2 x_3^2-\frac{1}{2} \sqrt{3} x_1^2 x_2 x_3-\frac{x_1^4}{12}-\frac{x_1 x_3^3}{\sqrt{6}}=0\Big\}, \text{ and }
\end{split}
\]

\[
\begin{split}
   \Big\{  &  \frac{x_0^4}{2 \sqrt{3}}+\frac{1}{3} x_1 x_0^3+x_2 x_3 x_0^2+\frac{1}{3} x_1^3 x_0+\frac{1}{3} \sqrt{2} x_2^3 x_0+\sqrt{\frac{2}{3}} x_3^3x_0 \\ & +\sqrt{\frac{2}{3}} x_1 x_2^3-\frac{1}{3} \sqrt{2} x_1 x_3^3+x_1^2 x_2 x_3-\frac{x_1^4}{2 \sqrt{3}}=0\Big\}.
\end{split}
\]

\item  The unique  smooth quartic surface  invariant by the primitive representations in $\operatorname{PGL}_4(\mathbb{C})$ of $\operatorname{PSL}_2(\mathbb{F}_7)$ is $\{2 x_0^4+ 6 x_1 x_2 x_3 x_0+x_1 x_3^3+x_1^3 x_2+x_2^3 x_3=0\}$.
\end{enumerate}

\end{A}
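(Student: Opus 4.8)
The plan is to translate the geometric problem into representation theory. A quartic surface $\{F=0\}\subset\PP^3$ is invariant under $G<\PGL_4(\CC)$ exactly when the line $\CC\cdot F$ inside $\operatorname{Sym}^4(V^*)$, with $V=\CC^4$, is stabilized by $G$. Since $G$ acts only projectively, I first fix, for each group appearing in the statement, a lift $\tilde G<\GL_4(\CC)$ of a faithful primitive representation $G\hookrightarrow\PGL_4(\CC)$; such faithful projective representations correspond to linear representations of the relevant central extension (Schur cover), and together with all inclusions among the primitive groups they are exactly the data tabulated in \cite{Ivan2019}. With a lift chosen, $\tilde G$ acts linearly on $\operatorname{Sym}^4(V^*)$, and a $G$-invariant quartic corresponds to a one-dimensional $\tilde G$-subrepresentation, i.e.\ to a vector $F$ with $\tilde g\cdot F=\chi(\tilde g)F$ for some character $\chi$ of $\tilde G$. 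Hence the set of $G$-invariant quartics is the union, over the finitely many such characters $\chi$, of the projectivized isotypic subspaces $\PP\big((\operatorname{Sym}^4 V^*)_\chi\big)$, each of which is a linear system (a point, a pencil, etc.) of invariant quartics.

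Next I would determine these isotypic subspaces concretely. The dimension of each $(\operatorname{Sym}^4 V^*)_\chi$ is computed by Molien's formula, evaluating $\tfrac{1}{|\tilde G|}\sum_{g}\overline{\chi(g)}\,\operatorname{tr}(g\mid\operatorname{Sym}^4 V^*)$, with $\operatorname{tr}(g\mid\operatorname{Sym}^4 V^*)$ read off from the eigenvalues of $g$ on $V$. This tells me, for each group and representation, whether the invariant family is a single surface, a pencil, or larger: for $\PSL_2(\mathbb F_7)$ and for the Heisenberg-type groups in (1) the invariant space is one-dimensional, for the two inequivalent $4$-dimensional representations of $\mathfrak A_5$ (one genuine, one factoring through the double cover) it is a pencil, and for $\mathfrak S_5$ it is cut down inside the corresponding $\mathfrak A_5$ pencil. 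I then produce explicit generators by solving the linear system $\tilde g\cdot F=\chi(\tilde g)F$ over a generating set of $\tilde G$; this is precisely the computational method of Appendix~\ref{Program}, and it yields the explicit quartics displayed in the statement. The inclusion diagram of \cite{Ivan2019} lets me avoid redundant groups: since $G_1<G_2$ forces the $G_2$-invariants to lie inside the $G_1$-invariants, it suffices to run the computation for a controlled set of groups and read off the rest.

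The remaining and most delicate task is to decide smoothness within each family. A member $\{F=0\}$ is singular iff $\partial_0 F=\cdots=\partial_3 F=0$ has a common projective solution, so I would form the discriminant of the pencil $\lambda_0F_0+\lambda_1F_1$ as a homogeneous polynomial in $(\lambda_0:\lambda_1)$ and locate its zeros; because $G$ preserves every fiber, the singular locus of each member is a $G$-invariant set, which drastically constrains where it can sit and makes the elimination tractable. For the $\mathfrak A_5$ pencils this produces the explicit singular parameters listed and shows that only finitely many fibers degenerate, so the generic member is smooth; for the single surfaces in parts (1), (3), and (4) I would verify directly that the Jacobian ideal is irrelevant, e.g.\ by checking the discriminant is nonzero. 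Finally, for part (1), after exhibiting the unique invariant quartic I would construct an explicit transformation in $\PGL_4(\CC)$ carrying it to the Burnside quartic $\{x_0^4+x_1^4+x_2^4+x_3^4+12x_0x_1x_2x_3=0\}$; such a map is forced by the fact that both surfaces carry the same primitive Heisenberg-type symmetry, so the change of coordinates is found by matching the two $4$-dimensional representations.

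The main obstacle I anticipate is twofold. First, choosing the correct central extensions $\tilde G$ and the genuinely \emph{primitive} faithful representations (the Schur-multiplier bookkeeping): an incorrect lift changes the admissible characters $\chi$ and hence the invariant spaces, so this step must be pinned down carefully against \cite{Ivan2019}. Second, the discriminant analysis for the $\mathfrak A_5$ pencils, where extracting the exact singular fibers in closed form—including the radicals appearing in the statement—is the genuinely heavy computation and is where the method of Appendix~\ref{Program} is indispensable.
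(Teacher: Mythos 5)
Your plan is essentially the paper's own route: decomposing $\operatorname{Sym}^4 V^*$ into character eigenspaces for a lift of each primitive group (the paper's ``maximal $G$-invariant subspaces'' $\ker T_k$ of Section~\ref{Section:=Theoretical }), computing them by solving $\tilde g\cdot F=\chi(\tilde g)F$ over generators as in Appendix~\ref{Program}, pruning via the inclusion diagrams, and settling smoothness by discriminants of the resulting pencils; even your identification of the Burnside quartic proceeds, as in the paper, by conjugating the two copies of the order-$1920$ primitive group rather than by any new idea. The only cosmetic differences are your use of Molien's formula to predict dimensions (the paper just computes kernels directly) and the paper's additional appeal to the Oguiso--Yu singularity criterion (Lemma~\ref{MSC}) to shortcut some discriminant computations.
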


\begin{B}
    The  smooth quartic surface invariant by the biggest  group of projective automorphisms being finite primitive in $\operatorname{PGL}_4(\mathbb{C})$ is 
 $\{ x_0^4 + x_1^4 + x_2^4 + x_3^4 + 12 x_0 x_1 x_2 x_3=0\}$ (unique up to projective equivalence). Furthermore, its group of projective automorphisms is isomorphic to $\mathbb{Z}_2^4.\mathfrak{S}_5$ (with   order $1920$).


\end{B}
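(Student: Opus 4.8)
The plan is to deduce the statement directly from Theorem~A, which already lists every smooth quartic surface admitting an action of a finite primitive subgroup of $\PGL_4(\CC)$ together with the groups acting. First I would read off from that list the complete collection of finite primitive groups that occur: from item~(1), the groups $\mathbb{Z}_2^4.\mathfrak{S}_5$, $\mathbb{Z}_2^4.\mathfrak{A}_5$, $\mathbb{Z}_2^4.(\mathbb{Z}_5\rtimes\mathbb{Z}_4)$, $\mathbb{Z}_4^2\rtimes D_{10}$ and $\mathbb{Z}_4^2\rtimes\mathbb{Z}_5$; from items~(2)--(3), the primitive representations of $\mathfrak{A}_5$ and $\mathfrak{S}_5$; and from item~(4), $\PSL_2(\mathbb{F}_7)$. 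Recall also that $\mathfrak{A}_6$ is excluded, since it fixes no smooth quartic. Their orders are $1920$, $960$, $320$, $160$, $80$, $60$, $120$ and $168$ respectively, so the unique group of maximal order is $\mathbb{Z}_2^4.\mathfrak{S}_5$, of order $1920$.

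By item~(1) of Theorem~A the smooth quartic surface invariant under $\mathbb{Z}_2^4.\mathfrak{S}_5$ is unique up to projective equivalence and is projectively equivalent to the Burnside quartic $\{x_0^4+x_1^4+x_2^4+x_3^4+12x_0x_1x_2x_3=0\}$; call it $S$. This already yields the inclusion $\mathbb{Z}_2^4.\mathfrak{S}_5\subseteq\PAut(S)$ and shows that no surface in the list carries a finite primitive group of larger order.

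The only remaining point --- and the one requiring care --- is to promote this inclusion to the equality $\PAut(S)=\mathbb{Z}_2^4.\mathfrak{S}_5$, i.e.\ to rule out that the full group of projective automorphisms is strictly larger. Here I would argue as follows. Since $S$ is a smooth quartic surface (a K3), $\PAut(S)$ is a discrete, hence finite, subgroup of $\PGL_4(\CC)$. Moreover, primitivity is inherited by overgroups: any invariant subspace or nontrivial invariant direct-sum decomposition for $\PAut(S)$ would in particular be one for the subgroup $\mathbb{Z}_2^4.\mathfrak{S}_5$, which admits none; therefore $\PAut(S)$ is itself a finite primitive subgroup of $\PGL_4(\CC)$ carrying the smooth invariant quartic $S$. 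By the first paragraph it must appear in the list, and since it already contains the order-$1920$ group --- the maximum occurring there --- we conclude $\PAut(S)=\mathbb{Z}_2^4.\mathfrak{S}_5$. Alternatively, one may invoke the now-established Burnside bound $|\PAut(S)|\le 1920$ recalled in the introduction, together with the uniqueness statement of Bonnaf\'e--Sarti, to reach the same conclusion. The main obstacle is precisely this last step: it rests on the completeness of Blichfeldt's classification and on Theorem~A accounting for \emph{all} finite primitive groups, rather than on any further computation.
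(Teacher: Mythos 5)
Your proposal is correct and follows essentially the same route as the paper's own proof: establish the inclusion $\mathbb{Z}_2^4.\mathfrak{S}_5\subset\operatorname{PAut}(S)$ from the invariant-quartic computation, note that $\operatorname{PAut}(S)$ is finite and inherits primitivity from its subgroup, and then use Blichfeldt's classification together with the fact that $1920$ is the maximal order of a finite primitive group fixing a smooth quartic to force equality. The only cosmetic difference is that you justify finiteness via the K3/discreteness argument where the paper cites Matsumura--Monsky, which changes nothing of substance.
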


\begin{remark}
In \cite{laza2022automorphisms}, the authors affirm  that the Kond\={o} surface $X_{\text{K\={o} }}=\operatorname{Kum}(E_i\times E_i)$ is the quartic $\{x_0^4+x_1^4+x_2^4+x_3^4+12x_0x_1x_2x_3=0\}$, which seen to be an inaccuracy because of the work of Bonnafé and Sarti, \cite[Remark 3.3]{bonnafe2019k3}.
\end{remark}

 \subsection*{Outline of the article} 

 The paper is organized as follows. In Section \ref{section:preliminaries} gathers notation, global conventions, and known results that will be used throughout the paper. In Section \ref{Section:=Theoretical } is presented the mathematics behind the computational algorithms.  Section \ref{Diagrams} contains relabeling of the diagrams presented in  \cite[Appendix A]{Ivan2019}. These diagrams show
 some inclusions between all finite primitive subgroups of $\PGL_{4}(\CC)$, up to conjugates.
 The description of all these groups can be seen in \cite[Chapter VII]{FiniteCollineationGroups}.  In Section \ref{PrimitiveGroups}, we hand over explicitly all the projective  primitive representations of the groups studied. In Section \ref{Snpg}, for the sake of completeness, we present all the  Non-Primitive projective representations of the groups $\mathfrak{A}_5,\mathfrak{S}_5,\mathfrak{A}_6,\, \mathbb{Z}_2^4 \rtimes \mathbb{Z}_5,\, \mathbb{Z}_2^4 \rtimes D_{10} \, \text{y} \, \operatorname{PSL}_2(\mathbb{F}_7)$. In
 Section \ref{QuarticInvariants}  we find for each group G in the previous two sections the collection of all (maximal)
$G$-invariant subspace of quartics. In Section \ref{Results}
   we list all the smooth quartic surfaces by the finite primitive groups of $\operatorname{PGL}_4(\mathbb{C})$. Finally, in Appendix \ref{Program}, we display the pseudocode \textit{\textbf{G-invariant forms of degree d}} implemented in the Program \texttt{Mathematica} \cite{Mathematica}, used to compute the invariant smooth quartic surfaces.


\subsection*{Acknowledgments}  The authors thank Universidad del Valle for the hospitality with the third author
during his stay in Cali. We sincerely thank Pedro Montero for introducing us  the problem and for his
valuable discussions. We also want to thank Arnaud Beauville and Yuri Tschinkel for suggesting we 
see the works of Ivan Cheltsov and Constantin Shramov during the stay of the third author in Roma on
the occasion of Sandro Verra’s 70th birthday. Finally, we thank Pablo Quezada Mora (student of Paola Comparin) for the valuable conversations about the topic in the event ENEMAT 2022.
The results of this work are part of the undergraduate
Thesis of the first Author. 
\ 
The third author was partially supported by Fondecyt Posdoctorado  ANID project 3210518.

\section{Notations and Preliminaries}\label{section:preliminaries}
For the reader's convenience and to set up the notation, we now briefly recall the needed notions.

\textbf{Some Constants}
\begin{itemize}
    \item[$\omega$:] Primitive $3$rd root of unity, $\omega^3 = 1$.
    \item[$i$:] Primitive $4$-th root of unity, $i^2=-1$.
    \item[$\mu$:] Primitive $5$-th root of unity, $\mu^5 = 1$. 
    \item[$\zeta$:] Primitive $7$-th root of unity, $\zeta^7=1$. 
    \item[$\psi$:] Primitive $8$-th root of unity, $\psi^2 = i$. 
    \item[$s_{\pm}$:] $\zeta^2 \pm \zeta^5$.
    \item[$t_{\pm}$:] $\zeta^4 \pm \zeta^3$.
    \item[$u_{\pm}$:] $\zeta \pm \zeta^6$. 
    \item[$\lambda_{\pm}$:] $(-1 \pm i \sqrt{15})/4$. 
    \item[$\nu_{\pm}$:] $(\sqrt{3} \pm i \sqrt{5})/ 2 \sqrt{2}$. 
\end{itemize}

\textbf{Conventions and Notations}

\begin{itemize}
    \item $n$ always denote an integer $\geq 2$ and $d$ always denote an integer $\geq 3$,
    \item For us $\mathbb{Z}_n$ denotes the cyclic group of order $n$,
    \item $\mathbb{F}_q$ is the finite field of $q=p^a$ elements,
    \item The alternating and symmetric groups in $n$ letter are denoted by $\mathfrak{A}_n$ and $\mathfrak{S}_n$ respectively,
    \item A dihedral group of order $n$ is denoted by $D_n$. In particular, $D_{10}$ is the dihedral group of $10$ elements.
    \item Given a group $G$, here we denote $\widehat{G\times G}$ the group $(G\times G)\rtimes \mathbb{Z}_2 $, where the action is given exchanging factors. Similarly, the groups $\widehat{(\mathfrak{A}_4 \times \mathfrak{A}_4)\rtimes \ZZ_2^{(i)}}$ in  Diagram \ref{fig:1° - 12°Isomorphic} denote a extension of these groups by an element of order two which exchange the factors of $\mathfrak{A}_4 \times \mathfrak{A}_4$.
    \item Here $H.G$ denoted a nonsplit extension of $G$ by $H$, i.e., there is a short exact sequence of groups $1\to H\to H.G\to G\to 1$ and $H.G$ is not isomorphic to $H\times G$,
    \item The groups $\operatorname{GL_n(\mathbb{C})}$ and $\operatorname{PGL_n(\mathbb{C})}$ are the general linear group of degree $n$ over the complex number $\mathbb{C}$ and its projectivization.
    \item The group $\operatorname{SL}_n(\CC)$ is the special linear group of degree $n$ over complex numbers.
    \item $E_n$ is the identity matrix of size $n \times n$. 
\end{itemize}

A hypersurface of the complex projective space $\mathbb{P}^n$ is defined as the zero locus of a degree $d$ form $f$ in the polynomial ring $\mathbb{C}[x_0,\dots,x_n]$. A good reference to introduce into the study of hypersurfaces is presented by J. Kollár in \cite{kollar2019algebraic} or by O. Debarre in \cite{debarre2017geometry}. Also, I. Dolgachev presented a concise introduction to automorphisms of algebraic varieties in his notes written  for the occasion of the ``Fourth Latin American School on Algebraic Geometry and its Applications'' ELGA, in Talca, Chile, \cite{dolgachev2019brief}. 

We denote by $\operatorname{Form}_{n,d}$ to the subset of $\CC[x_0,\ldots,x_n]$ of all homogeneous polynomials of degree $d$ in the variables $x_0,\ldots,x_n$, and the zero polynomial. It is clear $\operatorname{Form}_{n,d}$ is a linear subspace of $\CC^{n+1}$ of finite dimension given by $\binom{n+d}{n}$.

For $f,g \in \CC[x_0,\ldots,x_n]$ we write $f \sim g$ if  $f = \lambda g$ for some $\lambda \in \CC^*$.

For $f \in \CC[x_0,\ldots,x_n]$ and $A \in \GL_{n+1}(\CC)$ we define $Af$ as the polynomial in  $\CC[x_0,\ldots,x_n]$ given by
\[
Af(x) = f(Ax),
\]
where 
\[
Ax = A 
\begin{bmatrix}
 x_0 \\
 x_1 \\
 \vdots \\
 x_n
\end{bmatrix}. 
\]

For $V$ a subspace of $\CC^{n+1}$, and matrix $(A) \in \PGL_{n+1}(\CC)$ we write $(A)V = \{ A v \, : \, v \in V \}$. 

\begin{definition} Recall that a hypersurface of  $\mathbb{P}^n$ is the zero locus of a single irreducible polynomial $f\in \mathbb{C}[x_0,\dots,x_n]$, i.e., $X=\{f=0\}$.
    Let $X$ and $Y$ be hypersurfaces in $\PP^n$. We say $X$ and $Y$ are projectively equivalent if and only if $X = (A)Y$ for some $(A) \in \PGL_{n+1}(\CC)$. 
\end{definition}

\begin{definition}
Let $X$ be a hypersurface in $\PP^n$ and $f \in \CC[x_0,\ldots,x_n]$ homogeneous non-constant polynomial. Define the following subgroups of $\PGL_{n+1}(\CC)$:
\[
\begin{split}
    \operatorname{PAut}(X) & =  \{ (A) \in \PGL_{n+1}(\CC) \, : \, (A)X = X \}, \\
    \operatorname{PAut}(f) & = \{ (A) \in \PGL_{n+1}(\CC) \, : \, Af \sim f \}.
\end{split}
\]
\end{definition}

\begin{example}
Let $$A=\begin{bmatrix}
a & 0 & 0 \\
0 & b & 0 \\
0 & 0 & c
\end{bmatrix}\in \operatorname{GL}_3(\mathbb{C}),$$ and consider the forms of degree two in $\mathbb{C}[x,y,z]$:
\[
f(x,y,z) = x^2+y^2-z^2, \quad g(x,y,z) = xz+yz+xy \quad \text{and} \quad h(x,y,z) = x^2+yz. \]
We see that $(A)\{f=0\}=\{f=0\}$ if and only if $f\sim Af$, then
$Af(x,y,z)=a^2x^2+b^2 y^2-c^2z^2$, so, the entries of the matrix $A$ must satisfy the relations $a^2=b^2=-c^2$, so $A=\lambda D$, where $\lambda\in \mathbb{C}^\times$ and $D$ is a diagonal matrix with entries $1$ or $\pm i$.
Similarly, if we do an analysis for the degree two form $g(x,y,z)=xz+yz+xy$, we get that $(A)\{g=0\} = \{g = 0 \}$ if and only if $Ag(x,y,z)=(ac)xz+(bc)yz+(ab)xy$, and so the equality $g= \lambda \cdot Ag$ implies that $a=b=c$, therefore $A=\lambda E_3$. Finally, we leave it to the reader to verify that the conditions that must to satisfy the entries of the matrix $A$ to get that $h \sim Ah$ are given by $$\operatorname{det}\begin{bmatrix}
a & b \\
c & a
\end{bmatrix}=0.$$ 
\end{example}

\begin{definition}\label{G-invariant}
    Let $X = \{ f = 0 \}$ be a hypersurface in $\PP^n$,  and let $g \in \CC[x_0,\ldots,x_n]$ be a homogeneous non-constant polynomial, and $G$ be a subgroup of $\PGL_{n+1}(\CC)$. We say $X$ is \textit{$G$-invariant} if and only if $(A)X = X$ for all $(A) \in G$, i.e., $G \subset \operatorname{PAut}(X)$. Likewise, we say $f$ is \textit{$G$-invariant} if and only if $Af \sim f$ for all $(A) \in G$, i.e., $G \subset \operatorname{PAut}(f)$.
\end{definition}

\begin{definition}
Let $V$ be a subspace of $\operatorname{Form}_{n,d}$ and $G$ be a subgroup of $\PGL_{n+1}(\CC)$. We say $V$ is a \textit{$G$-invariant subspace} of $\operatorname{Form}_{n,d}$ if and only if $ Af \sim f$ for all $f \in V$ and $(A) \in G$, i.e., $G \subset \operatorname{PAut}(f)$ for all $f \in V$. 
\end{definition}

\begin{definition}
Given $G$  a subgroup of $\PGL_{n+1}(\CC)$, a $G$-\textit{decomposition} of $\CC^{n+1}$ is a set $\Gamma$ of non-trivial subspaces of $\CC^{n+1}$ such that
\begin{itemize}
    \item[\textsc{i)}] $|\Gamma| \geq 2$.
    \item[\textsc{ii)}] $\CC^{n+1} = \bigoplus_{V \in \Gamma} V$.
    \item[\textsc{iii)}] For each $(A) \in G$ and $V \in \Gamma$ we have $(A)V = W$ for some $W \in \Gamma$. 
\end{itemize}
\end{definition}

\begin{definition}
A subgroup $G$ of $\PGL_{n+1}(\CC)$ is called  primitive if and only if there is no  $G$-decomposition of $\CC^{n+1}$. 
\end{definition}

\begin{definition}
    Let $G$ be a non-primitive subgroup of $\PGL_{n+1}(\CC)$. Then $G$ is called imprimitive if and only if for each $\Gamma$ a $G$-decomposition of $\CC^{n+1}$ and $V,W \in \Gamma$ there  exists $(A) \in G$ such that $(A)V = W$. 
\end{definition}

\begin{definition}
    Let $G$ be a subgroup of $\PGL_{n+1}(\CC)$. We say $G$ is \textit{intransitive} if and only if $G$ is neither primitive nor imprimitive.
\end{definition}
\begin{example}

\begin{enumerate}
    \item For $n \geq 2$ let $G = \langle (A) \rangle$ be the subgroup of $\PGL_n(\CC)$ generated by\\ $A \in \GL_n(\CC)$. We have $G$ is intransitive since every invertible matrix is diagonalizable. In particular, for $n = 4$,
\[
A = \begin{bmatrix}
0 & 1 & 0 & 0 \\
1 & 0 & 0 & 0 \\
0 & 0 & 0 & 1 \\
0 & 0 & 1 & 0
\end{bmatrix},
\]
and $G = \langle (A) \rangle \subset \PGL_4(\CC)$, we have the $G$-decomposition of $\CC^4$: $\Gamma = \{V_1,V_2\}$, where $V_1$ is generated by $e_1,e_2$ and $V_2$ is generated by $e_3,e_4$, as  usual $e_j$ denotes the $j$-th column of identity matrix $E_4$. 

\item On the other hand, 
consider the representation of $\operatorname{PSL}_2(\mathbb{F}_7)$ given in Remark \ref{klein}, denoted by $\overline{G}_{168}$ in the notation used in \cite{Ivan2019}. It is clear that $\overline{G}_{168}$ cannot be intranstive since $\overline{G}_{168}$ does not fix any point on $\PP^2$. If $\overline{G}_{168}$ is imprimitive, then we find a non-trivial homomorphism of groups $\varphi:\overline{G}_{168} \to \mathfrak{S}_3$, and since $\overline{G}_{168}$ is simple, then $\varphi$ is injective and we get a contradiction by the orders of $\overline{G}_{168}$ and $\mathfrak{S}_3$. Therefore, $\overline{G}_{168}$ is primitive.

\end{enumerate}


\end{example}
\section{Theoretical Invariant Subspaces}\label{Section:=Theoretical }


Let  $G$ be a subgroup of $\PGL_{n+1}(\CC)$. By the preliminaries, the problem of finding the smooth hypersurfaces of degree $d$ invariant by $G$ is equivalent to finding the homogeneous non-singular forms of degree $d$ invariant by $G$. This task can be done in two parts: 
\begin{itemize}
    \item[\textsc{i)}] To find all homogeneous forms $f \in \operatorname{Form}_{n,d}$ such that $f \sim Af$ for all $(A) \in G$, and 
    \item[\textsc{ii)}] determinate which of such $f$ are non-singular.
\end{itemize}

Here we will present a  solution to the first problem when $G$ is finite. Moreover, the solution is computable. In fact, we implemented an algorithm in \texttt{Mathematica} \cite{Mathematica} to solve the item \textsc{i)} for a given finite subgroup of $\PGL_4(\CC)$, see Appendix \ref{Program}.

With respect to item \textsc{i)}, we relate the   concepts of \textit{$G$-invariant subspace of $\operatorname{Form}_{n,d}$}, and \textit{maximal $G$-invariant subspace of $\operatorname{Form}_{n,d}$}. It turns out to be a finite number of such \textit{maximal $G$-invariant subspace of $\operatorname{Form}_{n,d}$}, see Corollary \ref{FiniteMaximalGInvariantSubspaces}, and the description of these subspaces is straightforward given a set of generators of $G$, see Theorem \ref{PrincipalInvariantSubspaceTheorem}. Therefore, Corollary \ref{Correspondence} concludes the item \textsc{i)}.

Concerning item \textsc{ii)}, we recall basic results of the theory of resultants and elimination. In particular the \textit{discriminant} of $f$ solves the question of determinating if $f$ is neither singular nor non-singular. However, the computation of the \textit{discriminant} could be an exaggeration to solve this question in many cases, so we will state the Lemma \ref{MSC} which is much easier to test on any homogeneous form, although this result is not an equivalent criterion of non-singularity.

\subsection{Invariant Subspaces}\label{Invariant Subspaces}

\begin{lemma}\label{LemaMaximalGSubspaces}
Let $V$ be a $G$-invariant non-trivial subspace of $\operatorname{Form}_{n,d}$, $(A) \in G$ and $p \in V$ non-zero. If  we have that $Ap=\lambda p$ for some unique $\lambda \in \CC^*$, then $Af = \lambda f$ for all $f \in V$.
\end{lemma}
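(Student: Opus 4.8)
The plan is to exploit two facts. First, the assignment $f \mapsto Af$ defined by $Af(x) = f(Ax)$ is \emph{linear} in $f$, since $A(f+g)(x) = (f+g)(Ax) = f(Ax) + g(Ax)$ and $A(cf) = c\,Af$. Second, by the very definition of a $G$-invariant subspace, \emph{every} non-zero element of $V$ is a projective eigenvector of $(A)$, i.e.\ satisfies $Ag \sim g$. The whole content of the lemma is that, although a priori each element of $V$ could carry its own projective eigenvalue, linearity forces all of these eigenvalues to coincide with the single value $\lambda$ attached to $p$.

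Concretely, I would fix an arbitrary $f \in V$; the case $f = 0$ is immediate since $A0 = 0 = \lambda\cdot 0$, so I may assume $f \neq 0$. Because $V$ is $G$-invariant there is some $\mu \in \CC^*$ with $Af = \mu f$, and the goal reduces to showing $\mu = \lambda$. I would then apply the hypothesis to the vector $p + f \in V$: if $p + f \neq 0$ there is $\nu \in \CC^*$ with $A(p+f) = \nu(p+f)$, while linearity gives $A(p+f) = Ap + Af = \lambda p + \mu f$. Comparing the two expressions yields $(\lambda - \nu)p + (\mu - \nu)f = 0$.

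From here the argument splits according to the linear (in)dependence of $p$ and $f$. If $p$ and $f$ are linearly independent (so in particular $p + f \neq 0$), then both coefficients must vanish, giving $\lambda = \nu = \mu$ and hence $Af = \lambda f$. If instead $p$ and $f$ are linearly dependent, then since both are non-zero we may write $f = c\,p$ with $c \in \CC^*$, and a one-line computation using linearity, $Af = c\,Ap = c\lambda p = \lambda f$, again gives the claim; this case also subsumes the degenerate situation $p + f = 0$, where $f = -p$. In either case $Af = \lambda f$, and since $f \in V$ was arbitrary the lemma follows.

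The argument is essentially elementary, so I do not expect a serious obstacle; the only point requiring a little care is that one cannot simply read off $\mu = \lambda$ from the coefficient comparison without first separating the linearly dependent case, where the relation $(\lambda-\nu)p + (\mu-\nu)f = 0$ does not force the coefficients to vanish. Treating that case by hand (and observing that it also covers $p + f = 0$) is precisely what is needed to make the proof watertight.
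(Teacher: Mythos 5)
Your proof is correct and follows essentially the same route as the paper's: apply the eigenvalue hypothesis to $f+p$, use linearity of $f \mapsto Af$ to compare coefficients, and treat the case where $f$ is proportional to $p$ separately. The only difference is cosmetic — you are slightly more careful in explicitly covering the degenerate case $p+f=0$, which the paper's proof glosses over.
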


\begin{proof}
Clearly, $Af = \lambda f$ if $f = 0$. Suppose then that  $f \neq 0$, we have $f,f+p \in V$, then $Af = af$ and $A(f+p) = b (f+p)$ for some $a,b \in \CC^*$. Thus
$
bf+bp = b(f+p) = A(f+p) = Af+Ap = af + \lambda p
$ 
and so $(b-a)f=(\lambda-b)p$. If $b-a = 0$, then $(\lambda - b) p = 0$ and $\lambda=b=a$. On the other hand, if $b-a \neq 0$, then $f = \tau p$ for some $\tau \in \CC^*$ and 
$
af=Af = A(\tau p) = \tau(Ap) = \tau \lambda p = \lambda f,
$
thus $a = \lambda$. In any case we get that $a = \lambda$. 
\end{proof}

\begin{theorem}
Let $V,W$ be two $G$-invariant subspaces of $\operatorname{Form}_{n,d}$. Then $V \cap W \neq 0$ implies $V + W$ is a $G$-invariant subspace of $\operatorname{Form}_{n,d}$.
\end{theorem}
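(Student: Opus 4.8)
The plan is to reduce the whole statement to the scalar-action principle recorded in Lemma~\ref{LemaMaximalGSubspaces}, which says that a fixed $(A)\in G$ acts on any $G$-invariant subspace by a single scalar, the same for every vector. So the first step is to fix an arbitrary $(A)\in G$ and apply that lemma separately to $V$ and to $W$ (both are nonzero because $V\cap W\neq 0$): this produces scalars $\lambda_A,\mu_A\in\CC^*$ such that $Af=\lambda_A f$ for every $f\in V$ and $Ag=\mu_A g$ for every $g\in W$.

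The crucial step, and the only place where the hypothesis is used, is to match these two scalars. I would pick a nonzero $p\in V\cap W$; viewing $Ap$ through the action on $V$ gives $Ap=\lambda_A p$, while viewing it through the action on $W$ gives $Ap=\mu_A p$. Since $p\neq 0$ this forces $\lambda_A=\mu_A$, so $(A)$ acts by one common scalar on both summands.

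To conclude, I would invoke the linearity of $f\mapsto Af$, which is immediate from $A(f_1+f_2)(x)=(f_1+f_2)(Ax)=(Af_1)(x)+(Af_2)(x)$. Writing an arbitrary element of $V+W$ as $v+w$ with $v\in V$ and $w\in W$, we then get
\[
A(v+w)=Av+Aw=\lambda_A v+\lambda_A w=\lambda_A(v+w),
\]
so $A(v+w)\sim v+w$; as $(A)\in G$ was arbitrary, $V+W$ is $G$-invariant.

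There is no real difficulty to flag here: the argument is essentially one line once the lemma supplies the scalar action. The only point that deserves care is that the nonvanishing intersection is genuinely needed. Indeed, if $\lambda_A\neq\mu_A$ for some $A$ while $V$ and $W$ shared no common nonzero vector, then for independent $v,w$ the image $A(v+w)=\lambda_A v+\mu_A w$ would not be proportional to $v+w$, and $V+W$ would fail to be $G$-invariant; so the hypothesis $V\cap W\neq 0$ is exactly what makes the scalars agree.
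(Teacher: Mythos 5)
Your proposal is correct and follows essentially the same route as the paper: both arguments rest on Lemma~\ref{LemaMaximalGSubspaces} to show that each $(A)\in G$ acts by a single scalar on all of $V$ and on all of $W$, use a nonzero $p\in V\cap W$ to force those two scalars to coincide, and then conclude by linearity of $f\mapsto Af$. The only cosmetic difference is the order of steps (the paper extracts $\lambda$ from $Ap=\lambda p$ first and then propagates it to $V$ and $W$, whereas you obtain the two scalars first and then match them), which changes nothing of substance.
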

\begin{proof}
There is nothing to prove if $V$ or $W$ is trivial. Therefore, we can suppose that $V$ and $W$ are non-trivial, and take $p \in V \cap W$ non-zero polynomial. For $(A) \in G$ we have that $Ap = \lambda p$ for some $\lambda \in \CC^*$. By the previous Lemma \ref{LemaMaximalGSubspaces}, we see that $Af = \lambda f$, and $Ag = \lambda g$ for all $f \in V$ and $g \in W$. 

If $f+g \in V + W$, $f \in V$ and $g \in W$, then 
$
A(f+g) = Af + Ag = \lambda f + \lambda g = \lambda(f+g)
$
and so $A(f+g) \sim f+g$.
\end{proof}

\begin{corollary}
If $f \in \operatorname{Form}_{n,d}$ is a non-zero polynomial $G$-invariant, then there exists a unique $V$ maximal $G$-invariant subspace of $\operatorname{Form}_{n,d}$ such that $f \in V$. 
\end{corollary}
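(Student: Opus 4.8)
The plan is to split the statement into existence and uniqueness, deducing both from the finite-dimensionality of $\operatorname{Form}_{n,d}$ together with the additivity Theorem just established, namely that two $G$-invariant subspaces with nonzero intersection sum to a $G$-invariant subspace.

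For existence, first I would observe that the one-dimensional subspace $\CC f$ is itself $G$-invariant: indeed, $f$ being $G$-invariant means $Af \sim f$ for every $(A) \in G$, so $\CC f$ satisfies the defining condition of a $G$-invariant subspace. Hence the family $\mathcal{F}$ of $G$-invariant subspaces of $\operatorname{Form}_{n,d}$ containing $f$ is nonempty. Since $\operatorname{Form}_{n,d}$ has finite dimension $\binom{n+d}{n}$, I would pick $V \in \mathcal{F}$ of maximal dimension. This $V$ is in fact maximal among \emph{all} $G$-invariant subspaces: if some $G$-invariant subspace $V'$ satisfied $V \subsetneq V'$, then $f \in V \subseteq V'$ would force $V' \in \mathcal{F}$ with $\dim V' > \dim V$, contradicting the choice of $V$. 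Thus $V$ is a maximal $G$-invariant subspace containing $f$.

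For uniqueness, suppose $V$ and $W$ are both maximal $G$-invariant subspaces containing $f$. Since $f \neq 0$ and $f \in V \cap W$, we have $V \cap W \neq 0$, so the hypothesis of the preceding Theorem is met and $V + W$ is a $G$-invariant subspace. As $V \subseteq V + W$ and $V$ is maximal, we conclude $V = V + W$, whence $W \subseteq V$; the symmetric argument gives $V \subseteq W$, so $V = W$.

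The argument is essentially immediate once the additivity Theorem is available, so I do not expect a serious obstacle. The only two points requiring care are that $\CC f$ is genuinely $G$-invariant, which guarantees $\mathcal{F} \neq \emptyset$, and that a dimension-maximal element of $\mathcal{F}$ is automatically maximal among all $G$-invariant subspaces; this last step is precisely where the finite-dimensionality of $\operatorname{Form}_{n,d}$ is used.
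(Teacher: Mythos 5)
Your proof is correct and is exactly the argument the paper leaves implicit: the corollary is stated there without proof as an immediate consequence of the preceding additivity theorem, and your existence step (via $\CC f$ and finite-dimensionality) together with the uniqueness step (via $V+W$) fills in precisely that reasoning. No gaps.
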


\begin{corollary}\label{FiniteMaximalGInvariantSubspaces}
There is a finite number of maximal $G$-invariant subspaces of $\operatorname{Form}_{n,d}$. 
\end{corollary}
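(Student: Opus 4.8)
The plan is to attach to each maximal $G$-invariant subspace a discrete invariant — the tuple of eigenvalues of a fixed finite generating set of $G$ — and then to show that this tuple determines the subspace, so that finiteness follows from the elementary fact that a linear operator on the finite-dimensional space $\operatorname{Form}_{n,d}$ has only finitely many eigenvalues. I emphasize at the outset that the naive approach is not enough: from the previous theorem together with maximality one sees that two distinct maximal $G$-invariant subspaces meet only in $0$, but a family of subspaces pairwise meeting in $0$ inside a finite-dimensional space can be infinite (think of the lines through the origin of a plane), so some genuinely discrete datum must be produced.

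First I would fix matrices $A_1,\ldots,A_k\in\GL_{n+1}(\CC)$ whose classes generate $G$; this is possible because $G$ is finite. Let $V$ be a maximal $G$-invariant subspace. Since $V$ is $G$-invariant, for any nonzero $p\in V$ we have $A_i p\sim p$, hence $A_i p=\lambda_i p$ for a unique $\lambda_i\in\CC^*$, and by Lemma \ref{LemaMaximalGSubspaces} this same scalar acts on all of $V$, i.e. $A_i f=\lambda_i f$ for every $f\in V$ and every $i$. Thus $V\subseteq W(\lambda_1,\ldots,\lambda_k):=\bigcap_{i=1}^{k}\ker(A_i-\lambda_i\Id)$, where each $\lambda_i$ is an eigenvalue of $A_i$ acting on $\operatorname{Form}_{n,d}$.

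The key step is to verify that $W:=W(\lambda_1,\ldots,\lambda_k)$ is itself $G$-invariant, and therefore equals $V$. If $f\in W$ then $A_i f=\lambda_i f$ and $A_i^{-1}f=\lambda_i^{-1}f$ for all $i$, so writing any $(A)\in G$ as a word in the $A_i^{\pm1}$ and iterating shows that $Af$ is a scalar multiple of $f$; hence $Af\sim f$, and since $A_j(Af)=\lambda_j(Af)$ we even get $Af\in W$. Thus $W$ is $G$-invariant, and as $V\subseteq W$ with $V$ maximal we conclude $V=W$. In particular the assignment $V\mapsto(\lambda_1,\ldots,\lambda_k)$ is well defined and injective on the set of maximal $G$-invariant subspaces. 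Since each $A_i$ has at most $\dim\operatorname{Form}_{n,d}=\binom{n+d}{n}$ distinct eigenvalues, there are only finitely many admissible tuples, and by injectivity only finitely many maximal $G$-invariant subspaces.

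The point requiring care — and the only real obstacle — is the interface between the linear action of the chosen representatives $A_i$ and the projective group $G$: one must check that the scalar by which a word in the $A_i^{\pm1}$ acts on $f\in W$ does not depend on how the corresponding element of $G$ is expressed (which is automatic here, since it is forced to be a single scalar on all of $W$), so that $W$ is genuinely stable under every element of $G$ and the eigenvalue tuple is a bona fide invariant of $V$. Once this bookkeeping is settled, the counting argument above yields the statement at once.
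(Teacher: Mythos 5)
Your argument is correct and is essentially the paper's own route: the simultaneous eigenspaces $W(\lambda_1,\ldots,\lambda_k)$ you construct are precisely the kernels $\ker(T_k)$ of Theorem \ref{PrincipalInvariantSubspaceTheorem}, and your injective labelling by eigenvalue tuples is the paper's finite index set $K$ (the paper pins the admissible scalars down to roots of unity via $k_i^{\kappa\sigma_i}=1$, while you bound them by the number of eigenvalues of each $A_i$ acting on $\operatorname{Form}_{n,d}$; either bound yields finiteness). Your opening caveat is also apt: the theorem immediately preceding the corollary only gives that distinct maximal subspaces meet in $0$, which does not by itself force finiteness, so the corollary genuinely rests on the generator/eigenvalue construction that the paper develops only afterwards under the assumption that $G$ is finite.
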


By the preliminaries results we obtain the correspondence between Smooth surfaces of degree $d$ contained in $\mathbb{P}^n$ and the non-singular forms contained in the collection of all the maximal $G$-invariant subspaces of $\operatorname{Form}_{n,d}$. In sum, we have Corollary \ref{Correspondence}.

\begin{corollary}\label{Correspondence}
Let $\Gamma$ be the finite collection of all maximal $G$-invariant subspaces of $\operatorname{Form}_{n,d}$. Then we have a correspondence between
\[
\begin{tikzcd}
\{ \textit{Smooth hypersurfaces of degree $d$ in $\PP^n$} \} \arrow[rr, Leftrightarrow] &  & \left\{\textit{Non-singular forms in $\bigcup \Gamma$} \right\}.
\end{tikzcd}
\]
\end{corollary}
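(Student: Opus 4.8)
The plan is to assemble the results already established in this subsection into the asserted correspondence, the crux being to identify the non-zero $G$-invariant forms with exactly the non-zero elements of $\bigcup\Gamma$, and then to pass to the quotient by the scalar relation $\sim$. First I would prove the set equality
\[
\{ f \in \operatorname{Form}_{n,d} \setminus \{0\} \,:\, f \text{ is } G\text{-invariant} \} = \Big( \bigcup \Gamma \Big) \setminus \{0\}.
\]
The inclusion $\supseteq$ is immediate from the definition of a $G$-invariant subspace: if $f \in V$ for some $V \in \Gamma$, then $Af \sim f$ for every $(A) \in G$, so $f$ is $G$-invariant. For the inclusion $\subseteq$, I invoke the Corollary preceding Corollary \ref{FiniteMaximalGInvariantSubspaces}, which states that every non-zero $G$-invariant form $f$ lies in a (unique) maximal $G$-invariant subspace, hence in some $V \in \Gamma$, so $f \in \bigcup\Gamma$. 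Intersecting both sides with the set of non-singular forms then yields that the non-singular forms in $\bigcup\Gamma$ are precisely the non-singular $G$-invariant forms of degree $d$.

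Next I would set up the map realizing the correspondence. Define $\Phi(f) = \{ f = 0 \}$ on the set of non-singular forms in $\bigcup\Gamma$. By the computation recorded in the preliminaries (that $(A)\{f=0\} = \{f=0\}$ holds if and only if $Af \sim f$, equivalently $\operatorname{PAut}(\{f=0\}) = \operatorname{PAut}(f)$), the fact that each such $f$ is $G$-invariant makes the hypersurface $\Phi(f)$ $G$-invariant, and it is smooth because $f$ is non-singular; thus $\Phi$ lands in the set of $G$-invariant smooth hypersurfaces of degree $d$. Surjectivity is the converse reading: a $G$-invariant smooth hypersurface $X$ equals $\{ f = 0 \}$ for some form $f$ which is non-singular (as $X$ is smooth) and $G$-invariant (as $(A)X = X$ forces $Af \sim f$), whence $f \in \bigcup\Gamma$ by the set equality above and $X = \Phi(f)$.

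Finally I would compute the fibers of $\Phi$ to upgrade it to a genuine bijection. For two non-singular forms $f, g$ of degree $d$ one has $\{f=0\} = \{g=0\}$ if and only if $f \sim g$: smoothness forces $f$ and $g$ to be reduced, and since $\CC[x_0,\dots,x_n]$ is a unique factorization domain, two reduced homogeneous polynomials of the same degree with the same zero locus differ by a unit, i.e.\ by a non-zero scalar. Hence $\Phi$ is constant exactly on the $\sim$-classes and descends to a bijection between the non-singular forms in $\bigcup\Gamma$ taken up to $\sim$ and the $G$-invariant smooth hypersurfaces of degree $d$, which is the correspondence indicated by the double arrow.

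The argument is essentially bookkeeping resting on the earlier lemmas, so there is no single hard computation. The only place where care is genuinely needed — and the only point where projective geometry enters rather than linear algebra — is the fiber description, where I must justify that a smooth hypersurface determines its defining form up to scalar; this is also where I would make explicit that the ``forms'' on the right are implicitly taken modulo $\sim$ and that the ``smooth hypersurfaces'' on the left are read as the $G$-invariant ones, consistent with the running hypothesis that $G < \PGL_{n+1}(\CC)$ is fixed throughout the section.
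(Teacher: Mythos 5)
Your argument is correct and follows the same route the paper intends: the paper gives no explicit proof of this corollary, simply asserting it follows ``by the preliminaries,'' and your write-up supplies exactly the missing bookkeeping (non-zero $G$-invariant forms $=$ non-zero elements of $\bigcup\Gamma$ via the uniqueness corollary, plus the standard fact that a smooth hypersurface determines its defining form up to scalar). Your explicit note that both sides must be read as $G$-invariant hypersurfaces and forms modulo $\sim$ is a welcome clarification of the paper's statement.
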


In the rest of this section we  suppose that $G$ is a finite group. We are going to construct all maximal $G$-invariant subspaces of $\operatorname{Form}_{n,d}$. In first place, let's take $\{A_i\}_{1 \leq i \leq m} \subset \SL_{n+1}(\CC)$ such that $G$ is generated by $\{(A_i) \}_{1 \leq i \leq m}$. Let $\sigma_i$ be the order of $(A_i)$, and $\kappa \geq 1$ such that $\kappa d$ is the least common multiple of $n+1$ and $d$, i.e., $\operatorname{lcm}(n+1,d)=\kappa d$. Thus, $A_i^{\sigma_i} = \tau_i E_{n+1}$ for some $\tau_i \in \CC^*$, since $\det(A_i) = 1$, then $\tau_i^{n+1} = \det(\tau_i E_{n+1}) = \det(A_i^{\sigma_i}) = 1,\, \tau_i^{n+1} = 1$. Now  let's define 
\[
K = \{ k \in \CC^m \mid k_i^{\kappa \sigma_i} = 1,\, \text{for each}\, 1 \leq i \leq m \}.
\]

For each $1 \leq j \leq m$, and  for any $k \in K$ denote by $B_{k,j}$ the linear transformation over $\text{Form}_{n,d}$ given by the formula
\[
B_{k,j}f = A_j f- k_j f.
\]

For each $k \in K$, we define $T_k \colon \operatorname{Form}_{n,d} \to \operatorname{Form}_{n,d}^m$ by $(T_k g)_j = B_{k,j} g$, for $1 \leq j \leq m$, and $g \in \operatorname{Form}_{n,d}$. It is easy to see  that $T_k$ is a linear map since each $B_{j,k}$ is a linear transformation. Let
\[
\Gamma = \{ \ker(T_k) \, : \, k \in K, \, \ker(T_k) \neq 0 \}
\]
be a collection, possibly empty, of non-trivial subspaces of $\operatorname{Form}_{n,d}$. 

\begin{lemma}\label{FirstLemmatoPrincipalTheoremInvariantSubspaces}
If $f \in \operatorname{Form}_{n,d}$ is $G$-invariant, then $f \in \ker(T_k)$ for some $k \in K$.
\end{lemma}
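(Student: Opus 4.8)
The plan is to reduce the statement to a single computation with the scaling scalars attached to the generators. Assume $f\neq 0$ (the zero polynomial lies in every $\ker(T_k)$, and $K\neq\varnothing$ since $(1,\dots,1)\in K$). Since $f$ is $G$-invariant, for each generator $(A_j)$ we have $A_jf\sim f$, so there is a scalar $\lambda_j\in\CC^*$, uniquely determined because $f\neq 0$, with $A_jf=\lambda_j f$. The candidate element of $K$ is then $k:=(\lambda_1,\dots,\lambda_m)$, and the whole proof amounts to checking that this $k$ really satisfies the defining congruences $k_j^{\kappa\sigma_j}=1$ of $K$; once that is done, $B_{k,j}f=A_jf-\lambda_j f=0$ for every $j$, so $T_kf=0$ and hence $f\in\ker(T_k)$.

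To verify $\lambda_j^{\kappa\sigma_j}=1$ I would iterate the relation $A_jf=\lambda_j f$ exactly $\sigma_j$ times, obtaining $A_j^{\sigma_j}f=\lambda_j^{\sigma_j}f$. On the other hand, since $\sigma_j$ is the order of $(A_j)$ in $\PGL_{n+1}(\CC)$, we have $A_j^{\sigma_j}=\tau_j E_{n+1}$, and evaluating the action on $f$ together with the homogeneity of degree $d$ gives $A_j^{\sigma_j}f(x)=f(\tau_j x)=\tau_j^{\,d}f(x)$. Comparing the two expressions and using $f\neq 0$ yields the key identity $\lambda_j^{\sigma_j}=\tau_j^{\,d}$.

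It remains to feed in the arithmetic role of $\kappa$. From $\det(A_j)=1$ we already know $\tau_j^{\,n+1}=1$, and since $\kappa d=\operatorname{lcm}(n+1,d)$ is a multiple of $n+1$, we get $\tau_j^{\,\kappa d}=1$. Raising the key identity to the $\kappa$-th power then gives $\lambda_j^{\kappa\sigma_j}=(\tau_j^{\,d})^{\kappa}=1$, which is exactly the condition placing $k$ in $K$, completing the argument.

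I do not expect a serious obstacle here: the argument is essentially bookkeeping of exponents. The only places that require care are making sure the left action is iterated correctly so that $A_j^{\sigma_j}f$ really equals $\lambda_j^{\sigma_j}f$, and that the substitution $x\mapsto\tau_j x$ picks up the factor $\tau_j^{\,d}$ rather than $\tau_j^{\,n+1}$. The definition of $\kappa$ via $\operatorname{lcm}(n+1,d)=\kappa d$ is precisely engineered so that the order of $\tau_j^{\,d}$ divides $\kappa$, which is the one subtle point linking the homogeneity degree $d$ to the ambient dimension $n+1$.
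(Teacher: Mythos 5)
Your proof is correct and follows essentially the same route as the paper's: set $\lambda_j$ by $A_jf=\lambda_jf$, derive $\lambda_j^{\sigma_j}=\tau_j^{d}$ from $A_j^{\sigma_j}=\tau_jE_{n+1}$ and homogeneity, and use $\tau_j^{\kappa d}=1$ to conclude $\lambda_j^{\kappa\sigma_j}=1$, so $(\lambda_j)\in K$ and $f\in\ker(T_\lambda)$. Your write-up is in fact slightly more explicit than the paper's about the $f=0$ case and about why the substitution $x\mapsto\tau_jx$ contributes $\tau_j^{d}$ rather than $\tau_j^{n+1}$.
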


\begin{proof}
If $f = 0$, then $f \in \ker(T_k)$ for all $k \in K$. On the other hand, if $f \neq 0$, then $A_if = \lambda_i f$ for some $\lambda_i \in \CC^*$. Thus,
$
\lambda_i^{\sigma_i} f = A_i^{\sigma_i} f = (\tau_i E_{n+1}) f = \tau_i^d f,
$ 
$\lambda_i^{\sigma_i} = \tau_i^d,\, \lambda_i^{\kappa \sigma_i} = \tau_i^{\kappa d} = 1,\, \lambda_i^{\kappa \sigma_i} = 1$. Therefore, $\lambda =(\lambda_i) \in K$ and $f \in \ker(T_\lambda)$.
\end{proof}

\begin{lemma}\label{SecondLemmatoPrincipalTheoremInvariantSubspaces}
The subspace $\ker(T_k)$ is a $G$-invariant subspace of $\operatorname{Form}_{n,d}$ for all $k \in K$.
\end{lemma}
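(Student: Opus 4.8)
The plan is to reduce the statement to one clean structural fact: for any fixed nonzero form $f \in \operatorname{Form}_{n,d}$, the set $\operatorname{PAut}(f) = \{(A) \in \PGL_{n+1}(\CC) : Af \sim f\}$ is a \emph{subgroup} of $\PGL_{n+1}(\CC)$. First I would observe that there is nothing to check about the subspace structure: since $T_k$ is a linear map (each $B_{k,j}f = A_jf - k_jf$ is linear), its kernel $\ker(T_k)$ is automatically a linear subspace of $\operatorname{Form}_{n,d}$. Thus the entire content of the lemma is the $G$-invariance, namely that $Af \sim f$ for every $f \in \ker(T_k)$ and every $(A) \in G$, which by the definition of a $G$-invariant subspace is the same as asserting $G \subseteq \operatorname{PAut}(f)$ for each such $f$.

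Next I would unwind membership in $\ker(T_k)$. Fix $f \in \ker(T_k)$. By definition $(T_kf)_j = B_{k,j}f = A_jf - k_jf = 0$ for each $1 \le j \le m$, i.e.\ $A_jf = k_jf$. Because $k_j^{\kappa\sigma_j}=1$ we have $k_j \in \CC^*$, so each generator satisfies $A_jf \sim f$, that is $(A_j) \in \operatorname{PAut}(f)$. In other words, every element of $\ker(T_k)$ is a simultaneous semi-eigenvector of all the generating matrices $A_1,\dots,A_m$, with nonzero eigenvalues $k_1,\dots,k_m$.

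The key step is then to verify the subgroup property of $\operatorname{PAut}(f)$, after which $G = \langle (A_1),\dots,(A_m)\rangle \subseteq \operatorname{PAut}(f)$ is immediate and yields exactly the claimed invariance. For closure under products, if $Af = \alpha f$ and $Bf = \beta f$ with $\alpha,\beta \in \CC^*$, then the convention $Af(x)=f(Ax)$ gives $(AB)f(x) = f(ABx) = (Af)(Bx) = B(Af)(x)$, so $(AB)f = B(\alpha f) = \alpha\beta f \sim f$; for inverses, $f = A^{-1}(Af) = A^{-1}(\alpha f) = \alpha\,A^{-1}f$ forces $A^{-1}f = \alpha^{-1}f \sim f$, and the identity is trivial. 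Since this holds for every $f \in \ker(T_k)$, we conclude $G \subseteq \operatorname{PAut}(f)$ for all $f \in \ker(T_k)$, which is precisely the $G$-invariance of $\ker(T_k)$. The only genuine subtlety — what I would flag as the ``hard part,'' though it is minor — is the bookkeeping around the order-reversal in the action $A \mapsto (f \mapsto Af)$, which is an anti-homomorphism; one must keep the composition $(AB)f = B(Af)$ straight so that the eigenvalues multiply correctly and no spurious failure of closure is introduced.
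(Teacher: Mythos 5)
Your proof is correct and follows essentially the same route as the paper's: membership in $\ker(T_k)$ gives $A_jf=k_jf$ with $k_j\in\CC^*$ for each generator, hence $(A_j)\in\operatorname{PAut}(f)$, and since $G=\langle (A_1),\dots,(A_m)\rangle$ one concludes $G\subseteq\operatorname{PAut}(f)$. The only difference is that you explicitly verify that $\operatorname{PAut}(f)$ is a subgroup (correctly handling the anti-homomorphism $(AB)f=B(Af)$), a step the paper leaves implicit.
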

\begin{proof}
If $f \in \ker(T_k)$, then $A_i f = k_i f$ for all $i$, so $\{(A_i)\}_{1 \leq i \leq m } \subset \operatorname{PAut}(f)$. Since $G$ is generate by $\{(A_i)\}_{1 \leq i \leq m }$, then $G \subset \operatorname{PAut}(f)$, so $\ker(T_k)$ is a $G$-invariant subspace of $\operatorname{Form}_{n,d}$.
\end{proof}

\begin{lemma}\label{ThirdLemmatoPrincipalTheoremInvariantSubspaces}
If $\ker(T_k) \cap \ker (T_\lambda) \neq 0$, then $k = \lambda$ and $\ker(T_k) = \ker(T_\lambda)$. 
\end{lemma}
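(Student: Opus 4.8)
The plan is to argue directly from a single non-zero polynomial witnessing the non-empty intersection. Suppose $\ker(T_k) \cap \ker(T_\lambda) \neq 0$ and pick any non-zero $p$ in this intersection. By the very definition of the maps $T_k$ and $T_\lambda$, the condition $p \in \ker(T_k)$ unwinds to $B_{k,j} p = A_j p - k_j p = 0$, that is $A_j p = k_j p$ for every $1 \leq j \leq m$; likewise $p \in \ker(T_\lambda)$ gives $A_j p = \lambda_j p$ for every $j$.

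First I would compare these two families of equalities: for each fixed $j$ we obtain $k_j p = A_j p = \lambda_j p$, hence $(k_j - \lambda_j) p = 0$. Because $p$ is a non-zero element of $\operatorname{Form}_{n,d}$, this forces $k_j = \lambda_j$ for each $j$, and therefore $k = \lambda$ as vectors in $\CC^m$. Once the two indices coincide, the equality $\ker(T_k) = \ker(T_\lambda)$ is immediate, since the two maps $T_k$ and $T_\lambda$ are literally the same map.

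There is essentially no technical obstacle here: the statement is a direct cancellation argument, and the only point requiring a moment's care is the non-vanishing of $p$, which is exactly what licenses dropping the common factor $p$ from $(k_j - \lambda_j) p = 0$. Conceptually this lemma plays the role, at the level of the explicit kernels $\ker(T_k)$, of the uniqueness of the eigenvalue scalar already exploited in Lemma \ref{LemaMaximalGSubspaces}: a non-zero common invariant polynomial cannot simultaneously scale under $A_j$ by two distinct factors. I expect the whole proof to occupy just a few lines, and its purpose is to guarantee that the subspaces collected in $\Gamma$ are indexed injectively by $k \in K$, so that distinct kernels meet only in the zero polynomial, a fact that will be used to assemble the maximal $G$-invariant subspaces in the proof of Theorem \ref{PrincipalInvariantSubspaceTheorem}.
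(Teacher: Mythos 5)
Your proof is correct and follows essentially the same route as the paper: take a non-zero $p$ in the intersection, deduce $A_j p = k_j p = \lambda_j p$ for all $j$, and cancel the non-zero $p$ to conclude $k = \lambda$, whence the kernels coincide. No gaps.
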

\begin{proof}
Suppose $p \in \ker(T_k) \cap \ker(T_\lambda)$ is a non-zero element, then $T_k(p) = 0$ and $T_\lambda(p) = 0$, this implies $A_i p = k_i p$ and $A_i p = \lambda_i p$ for all $i$, so $k_i p = \lambda_i p$, $k_i = \lambda_i$, $k = \lambda$ and $\ker(T_k) = \ker(T_\lambda)$.
\end{proof}

\begin{theorem} \label{PrincipalInvariantSubspaceTheorem} Let $\Gamma$ as in Corollary \ref{Correspondence}.
If $\Gamma \neq \emptyset$, then $\Gamma$ is the set of all maximal $G$-invariant subspaces of $\operatorname{Form}_{n,d}$. Otherwise, if $\Gamma = \emptyset$, then the trivial space is the unique $G$-invariant subspace of $\operatorname{Form}_{n,d}$.
\end{theorem}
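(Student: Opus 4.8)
The plan is to derive both alternatives of the statement from Lemma \ref{LemaMaximalGSubspaces}, which is the engine of the argument, together with the three auxiliary results Lemmas \ref{FirstLemmatoPrincipalTheoremInvariantSubspaces}, \ref{SecondLemmatoPrincipalTheoremInvariantSubspaces} and \ref{ThirdLemmatoPrincipalTheoremInvariantSubspaces}. First I would dispose of the case $\Gamma = \emptyset$. Suppose, for contradiction, that $V$ is a nonzero $G$-invariant subspace and pick $0 \neq f \in V$; since $V$ is $G$-invariant, $f$ is a $G$-invariant form, so Lemma \ref{FirstLemmatoPrincipalTheoremInvariantSubspaces} places $f$ inside some $\ker(T_k)$ with $k \in K$. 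Then $\ker(T_k) \neq 0$, contradicting $\Gamma = \emptyset$. Hence $\{0\}$ is the only $G$-invariant subspace, as claimed.

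For the main case $\Gamma \neq \emptyset$ I would prove the two inclusions between $\Gamma$ and the collection of maximal $G$-invariant subspaces. By Lemma \ref{SecondLemmatoPrincipalTheoremInvariantSubspaces} every $\ker(T_k) \in \Gamma$ is $G$-invariant, so the real content is maximality, and this is the step I expect to carry all the weight. Concretely: let $V$ be any $G$-invariant subspace with $\ker(T_k) \subseteq V$, and choose $0 \neq p \in \ker(T_k)$, so that $A_i p = k_i p$ for each generator $A_i$. Because $p \neq 0$ the scalar $k_i$ is the unique eigenvalue (and it lies in $\CC^*$, being a root of unity as $k \in K$), so Lemma \ref{LemaMaximalGSubspaces} forces $A_i f = k_i f$ for all $f \in V$ and all $i$. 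Consequently every $f \in V$ is annihilated by each $B_{k,i}f = A_i f - k_i f$, i.e. $f \in \ker(T_k)$, giving $V \subseteq \ker(T_k)$ and therefore $V = \ker(T_k)$. Thus $\ker(T_k)$ admits no strictly larger $G$-invariant subspace and is maximal.

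For the reverse inclusion, let $W$ be an arbitrary maximal $G$-invariant subspace; it is nonzero, for otherwise $\{0\}$ would be maximal, which is impossible since any nonzero member of $\Gamma$ strictly contains it. Pick $0 \neq f \in W$; as above $f$ is a $G$-invariant form, so Lemma \ref{FirstLemmatoPrincipalTheoremInvariantSubspaces} gives $A_i f = k_i f$ for all $i$, with $k = (k_i) \in K$. Applying Lemma \ref{LemaMaximalGSubspaces} this time to $W$ itself (with eigenvector $f$) yields $A_i g = k_i g$ for every $g \in W$ and every $i$, whence $W \subseteq \ker(T_k)$. Since $\ker(T_k)$ is $G$-invariant and $W$ is maximal, $W = \ker(T_k) \in \Gamma$. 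Finally, Lemma \ref{ThirdLemmatoPrincipalTheoremInvariantSubspaces} ensures that distinct $k$ produce genuinely distinct members of $\Gamma$, so the parametrization $k \mapsto \ker(T_k)$ on nonzero kernels is without repetition. The only delicate point throughout is checking that the eigenvalue in Lemma \ref{LemaMaximalGSubspaces} is uniquely determined when the lemma is invoked at each generator, but this is automatic once the chosen eigenvector is nonzero; the remainder is straightforward bookkeeping.
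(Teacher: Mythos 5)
Your proof is correct and follows essentially the same route as the paper's, resting on the same four lemmas (\ref{LemaMaximalGSubspaces}, \ref{FirstLemmatoPrincipalTheoremInvariantSubspaces}, \ref{SecondLemmatoPrincipalTheoremInvariantSubspaces}, \ref{ThirdLemmatoPrincipalTheoremInvariantSubspaces}) with the same two-inclusion structure. The only minor divergence is that you prove maximality of each $\ker(T_k)$ directly, by collapsing any $G$-invariant superspace back into $\ker(T_k)$ via Lemma \ref{LemaMaximalGSubspaces}, whereas the paper embeds $\ker(T_k)$ in a maximal subspace (necessarily some $\ker(T_\lambda)$ by its first inclusion) and then uses Lemma \ref{ThirdLemmatoPrincipalTheoremInvariantSubspaces} to force $\lambda = k$; your variant is marginally more self-contained since it avoids appealing to the existence of a maximal $G$-invariant subspace containing a given nontrivial one.
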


\begin{proof}
Suppose $\Gamma = \emptyset$, then we want to prove there is not  a non-zero $G$-invariant polynomial in $\operatorname{Form}_{n,d}$. If $f \in \operatorname{Form}_{n,d}$ is a $G$-invariant non-zero polynomial, then by Lemma \ref{FirstLemmatoPrincipalTheoremInvariantSubspaces} we have $f \in \ker(T_k)$ for some $k \in K$, so $\ker(T_k) \neq 0$ and $\ker(T_k) \in \Gamma$, which gives a contradiction.

Now we want to prove that $V$ is a non-trivial $G$-invariant  subspace of $\operatorname{Form}_{n,d}$ if and only if $V \in \Gamma$. Let $V$ be a maximal $G$-invariant non-trivial subspace of $\operatorname{Form}_{n,d}$, and let $p \in V$ be non-zero element. Then by Lemma \ref{FirstLemmatoPrincipalTheoremInvariantSubspaces} we have that $p \in \ker(T_k)$ for some $k \in K$, then $A_i p = k_i p$ for all $i$. If $f \in V$  by Lemma \ref{LemaMaximalGSubspaces} we have that  $A_i f = k_i f$ for all $i$, thus, $f \in \ker(T_k)$ and $V \subset \ker(T_k)$. However, $\ker(T_k)$ is a $G$-invariant subspace of $\operatorname{Form}_{n,d}$ because of Lemma \ref{SecondLemmatoPrincipalTheoremInvariantSubspaces}, thus $V = \ker(T_k)$ and $V \in \Gamma$. Then $\ker(T_k) \in \Gamma$ is contained in some $\ker(T_\lambda) \in \Gamma$ maximal $G$-invariant subspace of $\operatorname{Form}_{n,d}$. By Lemma \ref{ThirdLemmatoPrincipalTheoremInvariantSubspaces}, we have that $\lambda = k$, and $\ker(T_k)$ is a maximal $G$-invariant subspace of $\operatorname{Form}_{n,d}$. 
\end{proof}

The relevance of Theorem \ref{PrincipalInvariantSubspaceTheorem} is that it provides  an algorithm to find all maximal $G$-invariant subspace of $\operatorname{Form}_{n,d}$. Indeed, as the set of generators of $G$ is finite since $G$ is finite, then $K$ is finite too, and the linear maps $T_k$ have a finite size matrix representations. With such matrices we find the kernels of $T_k$ by software computations. Under these considerations we implemented the algorithm \textbf{InvariantForms}[\textit{G},\textit{d}] in \texttt{Mathematica} \cite{Mathematica}, see  Appendix \ref{Program}. 
 
\subsection{Non-Singularity}

In this section we recall some well-known facts in theory of resultants and elimination; the following references give a basic background in this matter \cite{Cox05,Gelfand94,vanWaerdenII}. Furthermore, we present a result by  K. Oguiso, and X. Yu which says that under certain conditions a homogeneous polynomial is singular, see Lemma \ref{MSC}.

The discriminant of a homogeneous form of degree $d$, $f \in \operatorname{Form}_{n,d}$, which we denote by $\operatorname{Disc}_d(f)$ or $\operatorname{Disc}(f)$, is a homogeneous polynomial in the coefficients $f$ of degree $(n+1)(d-1)^n$ such that $\operatorname{Disc}(f) \neq 0$ if and only if $f$ is non-singular.  



For example, if $n=1$, $d=2$, and $f = a x_0^2 + b x_0 x_1 + c \in \CC[x_0,x_1]$, then $\operatorname{Disc}(f) = 4ac - b^2$ and we have $f$ is non-singular if and only if $4ac-b^2 \neq 0$.

It is well-known that for fixed $n$ and $d$, the discriminant $\operatorname{Disc}(\cdot)$ exists as a polynomial function defined in $\operatorname{Form}_{n,d}$. The problem is how to compute it. From theory of Resultants we know $\operatorname{Disc}(f)$ is the resultant of all partial derivates of $f$. We follow the Macaulay's construction of this resultant to find the discriminant $\operatorname{Disc}(f)$.

The Macaulay's method: Consider two square matrices $Q(f)$ and $Q'(f)$, where the latter is a \textit{submatrix} of the former, such that
$
\det(Q(f)) = \operatorname{Disc}(f) \cdot \det(Q'(f)).
$
Thus, if $\det(Q'(f)) \neq 0$, we have the expression
\[
\operatorname{Disc}(f) = \frac{\det(Q(f))}{\det(Q'(f))}. 
\]

To overcome the difficulty occasioned by the possibility of vanishing of $\det(Q'(f))$, we can  take an element  $\varphi \in \operatorname{Form}_{n,d}$ such that $\det(Q'(\varphi)) \neq 0$. e.g., 
for the Fermat polynomial of degree $d$, $\varphi =  d^{-1}(x_0^d + x_1^d + \cdots + x_n^d)$, we have that $Q(\varphi)$ and $Q'(\varphi)$ are identity matrices. Now, if $\lambda$ and $\mu$ are indeterminates we have that
\[
\det(Q(\lambda f + \mu \varphi)) = \operatorname{Disc}(\lambda f + \mu \varphi) \cdot \det(Q'(\lambda f + \mu \varphi)).
\]
For fixed $f$, this expression is an identity of polynomials in the variables $\lambda$ and $\mu$, and $\det(Q'(\lambda f + \mu \varphi))$ is not the zero polynomial since for $\lambda = 0$ and $\mu = 1$ we have $\det(Q'(\lambda f + \mu \varphi)) = \det(Q'(\varphi))=1$. Therefore $\det(Q'(\lambda f + \mu \varphi))$ divides $\det(Q(\lambda f + \mu \varphi))$ as polynomials in the variables $\lambda$ and $\mu$, and we get the expression
\[
\operatorname{Disc}(f) = \left. \frac{\det(Q(\lambda f + \mu \varphi))}{\det(Q'(\lambda f + \mu \varphi))} \right|_{(\lambda,\mu) = (1,0)}
\]
of discriminant of $f$, which it is valid in general.



Certainly, in many cases it could be an exaggeration to compute the discriminant of $f$ to determinate if $f$ is non-singular. The following lemma gives us some sufficient conditions for singularity of a homogeneous form, which it is proved in \cite[Section 3, Lemma 3.2, Proposition 3.3]{oguiso2015automorphism}.

\begin{lemma}\label{MSC}
Let $\{ f = 0 \}$ a hypersurface of degree $d$ given by the homogeneous form $f \in \operatorname{Form}_{n,d}$ . Let $a,b$ two nonnegative integers such that $2a+b \leq n$. If there are $a+b$ distinct variables $x_{i_1},\ldots,x_{i_{a+b}}$ such that $f \in (x_{i_1},\ldots,x_{i_a}) + (x_{i_{a+1},\ldots,x_{i_{a+b}}})^2$, then $\{ f = 0 \}$ is singular. In particular, if $\{ f = 0 \}$ is a smooth hypersurface, then for every $i \in \{1,\ldots,n+1\}$, the monomial $x_i^{d-1}x_j$ must appear with non-zero coefficient in the form $f$ for some $j$. 
\end{lemma}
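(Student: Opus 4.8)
The plan is to exhibit an explicit singular point of $\{f=0\}$ by combining the algebraic hypothesis on $f$ with the projective dimension theorem. Write $S=\{i_1,\dots,i_a\}$ and $T=\{i_{a+1},\dots,i_{a+b}\}$ for the two disjoint sets of variable indices, and decompose the hypothesis as $f=\ell+q$ with $\ell\in(x_s:s\in S)$ and $q\in(x_t:t\in T)^2$; more precisely I would write $\ell=\sum_{s\in S}x_s g_s$ for suitable forms $g_s\in\operatorname{Form}_{n,d-1}$. Let $L\subset\PP^n$ be the linear subspace cut out by $x_{i_1}=\dots=x_{i_{a+b}}=0$; its projective dimension is $n-(a+b)$, which by the hypothesis $2a+b\le n$ is at least $a$.

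The first step is to check that $f$ and most of its partial derivatives vanish identically on $L$. Since every generator of the two ideals vanishes on $L$, we get $f|_L=0$. Differentiating, I would verify case by case that $\partial_{x_k}f$ vanishes on $L$ for every index $k\notin S$: for $k\notin S\cup T$ both $\partial_{x_k}\ell$ and $\partial_{x_k}q$ stay inside $(x_s:s\in S)$ and $(x_t:t\in T)$ respectively; and for $k=t\in T$ the derivative of $q$ keeps at least one factor from $T$ while the derivative of $\ell$ keeps its $S$-factor. The only partials that need not vanish on $L$ are those with respect to $x_s$, $s\in S$, and a short computation shows $\partial_{x_s}f|_L=g_s|_L$.

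The crux is then to produce a single point of $L$ at which all the remaining obstructions vanish. The restrictions $g_{i_1}|_L,\dots,g_{i_a}|_L$ are $a$ forms of degree $d-1$ on $L\cong\PP^{n-a-b}$; because $n-a-b\ge a$, the projective dimension theorem guarantees that these $a$ hypersurfaces have a common zero $p\in L$ (a form restricting to $0$ imposes no condition, so this only helps). At such a $p$ we have $f(p)=0$ and $\partial_{x_k}f(p)=0$ for all $k$, so $p$ is a singular point and $\{f=0\}$ is singular. I expect this dimension count to be the main point: it is exactly where the inequality $2a+b\le n$ is consumed, and one must be slightly careful that the relevant forms are genuinely defined on $L$ and that the theorem still applies when some $g_s|_L$ vanishes.

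For the ``in particular'' clause I would specialise to $a=0$, $b=n$, with $T=\{j:j\neq i\}$. If no monomial $x_i^{d-1}x_j$ (allowing $j=i$, that is $x_i^d$) occurred in $f$, then every monomial of $f$ would carry at least two factors from the variables other than $x_i$, giving $f\in(x_j:j\neq i)^2$; the main statement then forces $\{f=0\}$ to be singular. Equivalently, one checks directly that the coordinate point $e_i$ is a singular point, since $f(e_i)$ and the values $\partial_{x_k}f(e_i)$ read off precisely the coefficients of $x_i^d$ and of the $x_i^{d-1}x_j$. Taking the contrapositive yields the asserted non-vanishing.
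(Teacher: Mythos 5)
Your argument is correct: the restriction to the linear subspace $L=\{x_{i_1}=\dots=x_{i_{a+b}}=0\}$, the case check that only the partials $\partial_{x_s}f$ with $s\in S$ survive on $L$ (restricting there to $g_s|_L$), and the dimension count $\dim L=n-a-b\ge a$ feeding the projective dimension theorem are exactly where the hypothesis $2a+b\le n$ is used, and the specialisation $a=0$, $b=n$ gives the ``in particular'' clause. The paper itself offers no proof, deferring to Oguiso--Yu \cite[Lemma 3.2, Proposition 3.3]{oguiso2015automorphism}, and your argument is essentially the one given there, so there is nothing to add.
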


The last statement  is useful because it can be easily tested for any $f \in \operatorname{Form}_{n,d}$. 

\section{Diagrams}\label{Diagrams}
Here, we present a relabeling of the diagrams presented in  \cite[Appendix A]{Ivan2019}. These diagrams show
 some inclusions between all finite primitive subgroups of $\PGL_{4}(\CC)$, up to conjugates.
 The description of all these groups can be viewed in \cite[Chapter VII]{FiniteCollineationGroups}. In the next section, we will present a set of generators for some of these groups.
\begin{figure}[H]
\centering
\begin{minipage}{.5\textwidth}
  \centering
  \begin{tikzcd}
    12^\circ                                 &                                                     &                               &                              \\
                                         & 5^\circ \arrow[lu]                                  & 11^\circ                      &                              \\
    8^\circ \arrow[uu]                       & 9^\circ \arrow[luu]                                 & 6^\circ                       & 7^\circ \arrow[lu]           \\
    2^\circ \arrow[u] \arrow[ruu] \arrow[ru] & 10^\circ \arrow[u] \arrow[ruu] \arrow[lu]           & 3^\circ \arrow[u] \arrow[luu] & 4^\circ \arrow[u] \arrow[lu] \\
                                         & 1^\circ \arrow[rru] \arrow[ru] \arrow[lu] \arrow[u] &                               &                             
    \end{tikzcd}
    \captionof{figure}{1° - 12°}
    \label{fig:1° - 12°}
\end{minipage}%
\begin{minipage}{.5\textwidth}
  \centering
  \begin{tikzcd}
                              & 21^\circ                                 &                               \\
        18^\circ \arrow[ru]           & 20^\circ \arrow[u]                       & 19^\circ  \arrow[lu]           \\
        16^\circ \arrow[ru] \arrow[u] & 15^\circ \arrow[lu] \arrow[ru]           & 17^\circ \arrow[lu] \arrow[u] \\
                              & 14^\circ \arrow[lu] \arrow[ru] \arrow[u] &                               \\
                              & 13^\circ \arrow[u]                       &                              
    \end{tikzcd}
    \captionof{figure}{13° - 21°}
    \label{fig:13° - 21°}
\end{minipage}
\end{figure}

\begin{figure}[H]
\centering
\begin{minipage}{.5\textwidth}
  \centering
  \begin{tikzcd}
               & (F)                       &                           &     &                \\
               & (K) \arrow[u]             &                           & (D) &                \\
    (G) \arrow[ru] &                           & (C) \arrow[lu] \arrow[ru] &     & (E) \arrow[lu] \\
               & (A) \arrow[lu] \arrow[ru] &                           &     &               
    \end{tikzcd}
    \captionof{figure}{(A), (C) - (F), (G) y (K)}
    \label{fig:(A), (C) - (F), (G) y (K)}
\end{minipage}%
\begin{minipage}{.5\textwidth}
  \centering
    \begin{tikzcd}
    (H)           \\
    (B) \arrow[u]
    \end{tikzcd}
    \captionof{figure}{(B) y (H)}
    \label{fig:(B) y (H)}
\end{minipage}
\end{figure}

The following diagrams point out the isomorphic class to which each one of these groups belongs.

\begin{figure}[H]
    \centering
    \begin{tikzcd}
\widehat{\mathfrak{S}_4 \times \mathfrak{S}_4}                                       &                                                                                  &                                                            &                                                           \\
                                                                                     & \mathfrak{S}_4 \times \mathfrak{S}_4 \arrow[lu]                                  & \widehat{\mathfrak{A}_5 \times \mathfrak{A}_5}             &                                                           \\
\widehat{(\mathfrak{A}_4 \times \mathfrak{A}_4)\rtimes \ZZ_2^{(1)}} \arrow[uu]                 & \widehat{(\mathfrak{A}_4 \times \mathfrak{A}_4)\rtimes \ZZ_2^{(2)}} \arrow[luu]            & \mathfrak{S}_4 \times \mathfrak{A}_5                       & \mathfrak{A}_5 \times \mathfrak{A}_5 \arrow[lu]           \\
(\mathfrak{A}_4 \times \mathfrak{A}_4)\rtimes \ZZ_2 \arrow[u] \arrow[ruu] \arrow[ru] & \widehat{\mathfrak{A}_4 \times \mathfrak{A}_4} \arrow[u] \arrow[ruu] \arrow[lu]  & \mathfrak{A}_4 \times \mathfrak{S}_4 \arrow[u] \arrow[luu] & \mathfrak{A}_4 \times \mathfrak{A}_5 \arrow[u] \arrow[lu] \\
                                                                                     & \mathfrak{A}_4 \times \mathfrak{A}_4 \arrow[rru] \arrow[ru] \arrow[lu] \arrow[u] &                                                            &                                                          
\end{tikzcd}
    \caption{1° - 12°}
    \label{fig:1° - 12°Isomorphic}
\end{figure}

\begin{figure}[H]
\centering
\begin{minipage}{.5\textwidth}
  \centering
  \begin{tikzcd}
                                                        & \mathbb{Z}_2^4.\mathfrak{S}_6                                            &                                                         \\
\mathbb{Z}_2^4.\mathfrak{S}_5 \arrow[ru]                  & \mathbb{Z}_2^4\rtimes\mathfrak{A}_6 \arrow[u]                            & \mathbb{Z}_2^4.\mathfrak{S}_5 \arrow[lu]                  \\
\mathbb{Z}_2^4\rtimes \mathfrak{A}_5 \arrow[ru] \arrow[u] & \mathbb{Z}_2^4.(\mathbb{Z}_5\rtimes\mathbb{Z}_4) \arrow[lu] \arrow[ru] & \mathbb{Z}_2^4\rtimes \mathfrak{A}_5 \arrow[lu] \arrow[u] \\
                                                        & \mathbb{Z}_2^4\rtimes D_{10} \arrow[u] \arrow[lu] \arrow[ru]           &                                                         \\
                                                        & \mathbb{Z}_2^4\rtimes \mathbb{Z}_5 \arrow[u]                           &                                                        
\end{tikzcd}
    \captionof{figure}{13° - 21°}
    \label{fig:13° - 21° Isomorphic}
\end{minipage}%
\end{figure}

\begin{figure}[H]
\centering
\begin{minipage}{.5\textwidth}
  \centering
\begin{tikzcd}
                        & \operatorname{PSp}_4(\mathbb{F}_3)                &                                    &              &                                \\
                        & \mathfrak{S}_6 \arrow[u]             &                                    & \mathfrak{A}_7 &                                \\
\mathfrak{S}_5 \arrow[ru] &                                    & \mathfrak{A}_6 \arrow[ru] \arrow[lu] &              & \operatorname{PSL}_2(\mathbb{F}_7) \arrow[lu] \\
                        & \mathfrak{A}_5 \arrow[lu] \arrow[ru] &                                    &              &                               
\end{tikzcd}
    \captionof{figure}{(A), (C) - (F), (G) y (K)}
    \label{fig:(A), (C) - (F), (G) y (K) Isomorphic}
\end{minipage}%
\begin{minipage}{.5\textwidth}
  \centering
    \begin{tikzcd}
    \mathfrak{S}_5           \\
    \mathfrak{A}_5 \arrow[u]
    \end{tikzcd}
    \captionof{figure}{(B) y (H)}
    \label{fig:(B) y (H) Isomorphic}
\end{minipage}
\end{figure}

\section{Finite Primitive Groups}\label{PrimitiveGroups}

\subsection{Diagram: 1° - 12°}

 For Diagram \ref{fig:1° - 12°} it is enough to describe the group (1°).
\[
\begin{split}
E_2 \star S = 
\frac{\psi}{\sqrt{2}}
\begin{bmatrix}
 i & i & 0 & 0 \\
 1 & -1 & 0 & 0 \\
 0 & 0 & i & i \\
 0 & 0 & 1 & -1 \\
\end{bmatrix},\, 
E_2 \star W_1 =
\begin{bmatrix}
  i & 0 & 0 & 0 \\
 0 & -i & 0 & 0 \\
 0 & 0 & i & 0 \\
 0 & 0 & 0 & -i \\
\end{bmatrix} \\
S \star E_2 =
\frac{\psi}{\sqrt{2}}
\begin{bmatrix}
   i & 0 & i & 0 \\
 0 & i & 0 & i \\
 1 & 0 & -1 & 0 \\
 0 & 1 & 0 & -1 \\
\end{bmatrix}, \,
W_1 \star E_2 =
\begin{bmatrix}
   i & 0 & 0 & 0 \\
 0 & i & 0 & 0 \\
 0 & 0 & -i & 0 \\
 0 & 0 & 0 & -i \\
\end{bmatrix}
\end{split}
\tag{$1^\circ$}
\]

\subsection{Diagram: 13° - 21°}

\

For Diagram \ref{fig:13° - 21°}  it is enough to describe the groups 13°- 17° and 19°. Moreover, we have that (13°) is isomorphic to $\ZZ_2^4 \rtimes \ZZ_5$, (14°) is isomorphic to $\ZZ_2^4 \rtimes D_{10}$ and (19°) is isomorphic to $\ZZ_2^4 . \mathfrak{S}_5$, see \cite{Ivan2019}.
\[
    \begin{split}
    S_1 = 
    \begin{bmatrix}
        0 & 0 & 1 & 0 \\
        0 & 0 & 0 & 1 \\
        1 & 0 & 0 & 0 \\
        0 & 1 & 0 & 0 
    \end{bmatrix}, \,
    S_2 = 
    \begin{bmatrix} 
        0 & 1 & 0 & 0 \\
        1 & 0 & 0 & 0 \\
        0 & 0 & 0 & 1 \\
        0 & 0 & 1 & 0 
    \end{bmatrix}, \,
    T_1=
    \begin{bmatrix}
        1 & 0 & 0 & 0 \\
        0 & 1 & 0 & 0 \\
        0 & 0 & -1 & 0 \\
        0 & 0 & 0 & -1 
    \end{bmatrix}, \\ 
    T_2=
    \begin{bmatrix}
        1 & 0 & 0 & 0 \\
        0 & -1 & 0 & 0 \\
        0 & 0 & 1 & 0 \\
        0 & 0 & 0 & -1 
    \end{bmatrix}, \, 
    T= \frac{\psi}{\sqrt{2}}
    \begin{bmatrix}
    -i & 0 & 0 & i \\
    0 & 1 & 1 & 0 \\
    1 & 0 & 0 & 1 \\
    0 & -i & i & 0 
    \end{bmatrix}
    \end{split}
    \tag{$13^\circ$}
\]

\

\[
(13^\circ) \quad \text{and} \quad
R^2 = 
\begin{bmatrix}
 0 & i & 0 & 0 \\
 i & 0 & 0 & 0 \\
 0 & 0 & -1 & 0 \\
 0 & 0 & 0 & -1 
\end{bmatrix} 
\tag{$14^\circ$}
\]

\

\[
(13^\circ) \quad \text{and} \quad
R = \frac{1}{\sqrt{2}}
\begin{bmatrix}
 1 & i & 0 & 0 \\
 i & 1 & 0 & 0 \\
 0 & 0 & i & 1 \\
 0 & 0 & -1 & -i 
\end{bmatrix} 
\tag{$15^\circ$}
\]

\

\[
(13^\circ) \quad \text{and} \quad
\tag{$16^\circ$}
SB = 
\begin{bmatrix}
 -1 & 0 & 0 & 0 \\
 0 & -1 & 0 & 0 \\
 0 & 0 & i & 0 \\
 0 & 0 & 0 & -i 
\end{bmatrix}
\]

\[
(13^\circ) \quad \text{and} \quad
BR = 
\frac{\psi}{\sqrt{2}}
\begin{bmatrix}
  1 & i & 0 & 0 \\
 i & 1 & 0 & 0 \\
 0 & 0 & i & 1 \\
 0 & 0 & 1 & i 
\end{bmatrix}
\tag{$17^\circ$}
\]

\[
(13^\circ) \quad \text{and} \quad
B = 
\psi
\begin{bmatrix}
 1 & 0 & 0 & 0 \\
 0 & 1 & 0 & 0 \\
 0 & 0 & 1 & 0 \\
 0 & 0 & 0 & -1 \\
\end{bmatrix}
\tag{$19^\circ$}
\]

\subsection{Diagram: (A), (C) - (F), (G), and (K)}


For Diagram \ref{fig:(A), (C) - (F), (G) y (K)} it is enough to describe the groups $(A)$, $(G)$, $(C)$ y $(E)$. In addiction, we have that $(A)$ is isomorphic to $\mathfrak{A}_5$, $(G)$ is isomorphic to $\mathfrak{S}_5$, $(C)$ is isomorphic to $\mathfrak{A}_6$, and $(E)$ is isomorphic to $\operatorname{PSL}_2(\mathbb{F}_7)$, see \cite[Chapter VII]{FiniteCollineationGroups}.
\[
F_1 =
\begin{bmatrix}
1 & 0 & 0 & 0 \\
0 & 1 & 0 & 0 \\
0 & 0 & \omega & 0 \\
0 & 0 & 0 & \omega^2 \\
\end{bmatrix}, \,
F_2=
\frac{1}{\sqrt{3}}
\begin{bmatrix}
1 & 0 & 0 & \sqrt{2} \\
0 & -1 & \sqrt{2} & 0 \\
0 & \sqrt{2} & 1 & 0 \\
\sqrt{2} & 0 & 0 & -1 \\
\end{bmatrix}, \,
F_3=
\begin{bmatrix}
\frac{\sqrt{3}}{2} & \frac{1}{2} & 0 & 0 \\[6pt]
\frac{1}{2} & -\frac{\sqrt{3}}{2} & 0 & 0 \\[6pt]
0 & 0 & 0 & 1 \\
0 & 0 & 1 & 0 \\
\end{bmatrix}.
\tag{$A$}
\]

\[
(A) \quad \text{and} \quad
F'' = 
\begin{bmatrix}
0 & 1 & 0 & 0 \\
-1 & 0 & 0 & 0 \\
0 & 0 & 0 & 1 \\
0 & 0 & -1 & 0 \\
\end{bmatrix}
\tag{$G$}
\]

\[
(A) \quad \text{and} \quad
F_4 = 
\begin{bmatrix}
0 & 1 & 0 & 0 \\
1 & 0 & 0 & 0 \\
0 & 0 & 0 & -1 \\
0 & 0 & -1 & 0 \\
\end{bmatrix}
\tag{$C$}
\]

\[
\hat{S} = 
\begin{bmatrix} 
    1 & 0 & 0 & 0\\
    0 & \zeta & 0 & 0 \\
    0 & 0 & \zeta^4 & 0 \\
    0 & 0 & 0 & \zeta^2 
\end{bmatrix},\, 
\hat{T} = 
\begin{bmatrix}
    1 & 0 & 0 & 0 \\
    0 & 0 & 1 & 0 \\
    0 & 0 & 0 & 1 \\
    0 & 1 & 0 & 0 \\
\end{bmatrix}, \,
\hat{R} = \frac{1}{\sqrt{7}}
\begin{bmatrix}
 1 & 1 & 1 & 1 \\
 2 & s_{+} & t_{+} & u_{+} \\
 2 & t_{+} & u_{+} & s_{+}\\
 2 & u_{+} & s_{+} & t_{+}\\
\end{bmatrix}
\tag{$E$}
\]

\subsection{Diagram: (B), and (H)}


For Diagram \ref{fig:(B) y (H)} it is enough to describe both groups, $(B)$ y $(H)$. Likewise, we have that $(B)$ is isomorphic to $\mathfrak{A}_5$ and $(H)$ it is isomorphic to $\mathfrak{S}_5$, see \cite[Chapter VII]{FiniteCollineationGroups}. 
\[
F_1=
\begin{bmatrix}
1 & 0 & 0 & 0 \\
0 & 1 & 0 & 0 \\
0 & 0 & \omega & 0 \\
0 & 0 & 0 & \omega^2 \\
\end{bmatrix},
F_2'=
\begin{bmatrix}
1 & 0 & 0 & 0 \\
0 & \frac{-1}{3} & \frac{2}{3} & \frac{2}{3} \\[6pt]
0 & \frac{2}{3}& \frac{-1}{3} & \frac{2}{3} \\[6pt]
0 & \frac{2}{3} & \frac{2}{3} & \frac{-1}{3} \\
\end{bmatrix}, 
F_3'=
\begin{bmatrix}
-\frac{1}{4} & \frac{\sqrt{15}}{4} & 0 & 0 \\[6pt]
\frac{\sqrt{15}}{4} & \frac{1}{4} & 0 & 0 \\[6pt]
0 & 0 & 0 & 1 \\
0 & 0 & 1 & 0 \\
\end{bmatrix}
\tag{$B$}
\]

\[
(B) \quad \text{and} \quad 
F' = 
\psi
\begin{bmatrix}
1 & 0 & 0 & 0 \\
0& 1& 0 & 0 \\
0 & 0 & 0 & 1 \\
0& 0 & 1 & 0 
\end{bmatrix}.
\tag{$H$}
\]

\section{Some Non-Primitive Finite Groups}\label{Snpg}


Apart from the primitive representations of the groups $$\mathfrak{A}_5,\mathfrak{S}_5,\mathfrak{A}_6,\, \mathbb{Z}_2^4 \rtimes \mathbb{Z}_5,\, \mathbb{Z}_2^4 \rtimes D_{10} \, \text{y} \, \operatorname{PSL}_2(\mathbb{F}_7)$$ given  in the previous section, we present in this section the remaining faithful representations of these groups. In particular, we see that there are no non-primitive representations for the groups  $\mathbb{Z}_2^4 \rtimes \mathbb{Z}_5$, and $\mathbb{Z}_2^4 \rtimes D_{10}$, moreover, this is the case for all groups in the Diagram \ref{fig:13° - 21° Isomorphic}.


\[
Q_{11} = F_1,\, Q_{12} = F_2'\quad \text{and} \quad Q_{13} = 
\begin{bmatrix}
1 & 0 & 0 & 0 \\
0 & -1 & 0 & 0 \\
0 & 0 & 0 & \lambda_{+} \\
0 & 0 & \lambda_{-} & 0
\end{bmatrix}
\tag{$Q_1$}
\]

\[
Q_{41} =
\begin{bmatrix}
\omega & 0 & 0 & 0 \\
0 & \omega & 0 & 0 \\
0 & 0 & \omega^2 & 0 \\
0 & 0 & 0 & \omega^2
\end{bmatrix},\, 
Q_{42} = 
\begin{bmatrix}
\frac{1}{\sqrt{3}} & 0 & 0 & \frac{\sqrt{2}}{\sqrt{3}} \\[6pt]
0 & \frac{1}{\sqrt{3}} & \frac{\sqrt{2}}{\sqrt{3}} & 0 \\[6pt]
0 & \frac{\sqrt{2}}{\sqrt{3}} & -\frac{1}{\sqrt{3}} & 0 \\[6pt]
\frac{\sqrt{2}}{\sqrt{3}} & 0 & 0 & -\frac{1}{\sqrt{3}} 
\end{bmatrix},\,
Q_{43} =
\begin{bmatrix}
0 & 0 & 0 & \nu_{+} \\
0 & 0 & \nu_{+} \\
0 & \nu_{-} & 0 & 0 \\
\nu_{-} & 0 & 0 & 0
\end{bmatrix}
\tag{$Q_4$}
\]

\[
Q_{51} = Q_{41},\, Q_{52} = Q_{42} \quad \text{and} \quad
Q_{53} =
\begin{bmatrix}
0 & 0 & 0 & \nu_{+} \\
0 & 0 & \nu_{-} & 0 \\
0 & \nu_{+} & 0 & 0 \\
\nu_{-} & 0 & 0 & 0
\end{bmatrix}
\tag{$Q_5$}
\]

\[
(Q_5) \quad \text{and} \quad R_{34} = 
\begin{bmatrix}
0 & 0 & 1 & 0 \\
0 & 0 & 0 & -1 \\
1 & 0 & 0 & 0 \\
0 & -1 & 0 & 0 
\end{bmatrix}
\tag{$R_3$}
\]

\[
(Q_5) \quad \text{and} \quad Q_{74} = 
\frac{1}{\sqrt{5}}
\begin{bmatrix}
0 & 0 & \sqrt{3} & i\sqrt{2} \\
0 & 0 & -i\sqrt{2} & -\sqrt{3} \\
\sqrt{3} & i\sqrt{2} & 0 & 0 \\
-i\sqrt{2} & -\sqrt{3} & 0 & 0 \\
\end{bmatrix}
\tag{$Q_7$}
\]

\[
\tilde{S} = 
\begin{bmatrix} 
    1 & 0 &0 & 0 \\
    0 & \zeta^4& 0 & 0 \\
    0 & 0 & \zeta^2& 0 \\
    0 & 0 & 0& \zeta  \\
\end{bmatrix},\,
\tilde{T}=
\begin{bmatrix}
 1 & 0 & 0 & 0\\
0 & 0 & 1 & 0 \\
0 & 0 & 0 & 1 \\
0 & 1 & 0 & 0 \\
\end{bmatrix},\,
\tilde{R}=
\frac{i}{\sqrt{7}}\begin{bmatrix}
- i \sqrt{7} &0 &0 & 0\\
0& u_{-} & s_{-} & t_{-} \\
0& s_{-} & t_{-}& u_{-} \\
0& t_{-} & u_{-} & s_{-} \\
\end{bmatrix}
\tag{$P$}
\]

Here $(Q_1), (Q_4)$, and $(Q_5)$ are faithful representations of $\mathfrak{A}_5$, $(R_3)$, and $(Q_7)$ are faithful representation of $\mathfrak{S}_5$, and $\mathfrak{A}_6$ respectively. Finally, $(P)$ is the unique non-primitive faithful representation of $\operatorname{PSL}_2(\mathbb{F}_7)$.

By the work of H. Maschke in 1898 \cite{maschke1898bestimmung}, we have that the groups $(A)$, $(B)$, $(Q_1)$, $(Q_4)$, and $(Q_5)$ are all the faithful representations, up to conjugates, of $\mathfrak{A}_5$ in $\PGL_4(\CC)$. The groups $(G)$, $(H)$, and $(R)$ are all faithful representations, up to conjugates, of $\mathfrak{S}_5$ in $\PGL_4(\CC)$. Finally, the groups $(C)$, and $(Q_7)$ are all faithful representations, up to conjugates, of $\mathfrak{A}_6$ in $\PGL_4(\CC)$.

The projective special linear group $\operatorname{PSL}_2(\mathbb{F}_7)$ is the second-smallest non-cyclic simple group. For geometric reasons, it is an important group. For instance, it is the group of automorphisms of the Klein quartic curve,  $K=\{x^3y+z^3x+y^3z=0\}\subseteq\mathbb{P}^2$.
By a famous  Theorem of Hurwitz, see \cite[Chap. III, Theo. 3.9]{miranda1995algebraic}, we know that for smooth curves $C$ of genus $g\geq 2$, the order of its group of automorphism is bounded in terms of the genus $g$. More precisely, the bound is $|\operatorname{Aut}(C)|\leq 84(g-1)$. By the genus formula, the Klein curve $K$ genus is $g=3$, and it is well known that the unique curve of genus $g=3$ attains the Hurwitz bound. In fact, the order of $\operatorname{PSL}_2(\mathbb{F}_7)$ is $168=84(3-1)$.

By \cite[Proposition 4.5, (4)]{faina2019Zp} we have that $(C)$ and $(P)$ are all the faithful representations, up to conjugates, of $\operatorname{PSL}_2(\mathbb{F}_7)$ in $\PGL_4(\CC)$.

\begin{remark}\label{klein}
In \cite{klein1878ueber}, the author classified the unique faithfull representation $\rho$ of $\operatorname{PSL}_2(\mathbb{F}_7)$ in $\operatorname{PGL}_3(\mathbb{C})$. The subgroup $\rho(\operatorname{PSL}_2(\mathbb{F}_7))<\operatorname{PGL}_3(\mathbb{C})$ is generated by the matrices
$$ \begin{bmatrix}
 \zeta^4& 0 & 0 \\
0 & \zeta^2& 0 \\
0 & 0& \zeta  \\
\end{bmatrix}, 
 \begin{bmatrix}
 0& 1 & 0 \\
0 & 0& 1 \\
1 & 0& 0 \\
\end{bmatrix}, 
\text{ and } \, 
\dfrac{i}{\sqrt{7}}\begin{bmatrix}
u_{-}& s_{-} & t_{-} \\
s_{-} & t_{-}& u_{-} \\
t_{-} &  u_{-} & s_{-} \\
\end{bmatrix}.$$
\
\end{remark}

Note that the projective representation $(P)$ of $\operatorname{PSL}_2(\mathbb{F}_7)$ is a trivial extension of the last representation, found by Klein.

By \cite[Corollary 5.2.]{Ivan2019}, we have that (13°) is the unique faithful representation, up to conjugates, of $\ZZ_2^4 \rtimes \ZZ_5$ in $\PGL_4(\CC)$. The proof of this corollary shows there is no non-primitive subgroup of $\PGL_4(\CC)$ isomorphic to $\ZZ_2^4 \rtimes \ZZ_5$. Now we are in conditions to show that (14°) is the unique faithful representation, up to conjugates, of the group $\ZZ_2^4 \rtimes D_{10}$ in $\PGL_4(\CC)$. Suppose  that $\overline{G}_{160}$ is a subgroup of $\PGL_4(\CC)$ isomorphic to $\ZZ_2^4 \rtimes D_{10}$, as $\ZZ_2^4 \rtimes \ZZ_5$ is a subgroup of $\ZZ_2^4 \rtimes D_{10}$, then we have that $\overline{G}_{80}$ is a subgroup of $\overline{G}_{160}$ and $\PGL_4(\CC)$ isomorphic to $\ZZ_2^4 \rtimes \ZZ_5$. By the previous considerations $\overline{G}_{80}$ is a primitive subgroup of $\PGL_4(\CC)$, thus $\overline{G}_{160}$ is also a primitive subgroup of $\PGL_4(\CC)$. By Blichfeltd's classification of all finite primitive subgroups of $\operatorname{PGL}_4(\mathbb{C})$, we get that $\overline{G}_{160}$ is conjugate to (14°),  note that there is an only one finite primitive group in the Blichfeltd's classification of order 160. This argument works for all groups in the diagram \ref{fig:13° - 21° Isomorphic}. In conclusion, from the discussion presented in the last paragraph, we obtain  Corollary \ref{c}.


\begin{corollary}\label{c}
Let $G$ be subgroup of $\PGL_4(\CC)$ isomorphic to some group of the Diagram \ref{fig:13° - 21° Isomorphic}. Then $G$ is conjugate to some group of the Diagram \ref{fig:13° - 21°}. 
\end{corollary}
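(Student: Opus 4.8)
The plan is to reduce the statement to a single fact---that every subgroup of $\PGL_4(\CC)$ isomorphic to $\ZZ_2^4 \rtimes \ZZ_5$ is primitive---and then propagate primitivity upward through the diagram. Once a candidate $G$ is known to be finite and primitive, Blichfeldt's classification places it, up to conjugacy, among the finite primitive subgroups of $\PGL_4(\CC)$; since the nine conjugacy classes $(13^\circ)$--$(21^\circ)$ exhaust the primitive subgroups of $\PGL_4(\CC)$ of the isomorphism types appearing in Diagram \ref{fig:13° - 21° Isomorphic}, this yields the desired conclusion. Thus the entire argument hinges on showing each such $G$ is primitive.

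First I would record the base case: by \cite[Corollary 5.2]{Ivan2019}, every subgroup of $\PGL_4(\CC)$ isomorphic to $\ZZ_2^4 \rtimes \ZZ_5$ is conjugate to $(13^\circ)$ and in particular is primitive; equivalently, there is no non-primitive subgroup of $\PGL_4(\CC)$ of this isomorphism type. Next I would establish the monotonicity lemma that drives the induction: if $H \leq G$ are subgroups of $\PGL_4(\CC)$ and $H$ is primitive, then $G$ is primitive. This is immediate from the definition of a $G$-decomposition, since any $G$-decomposition $\Gamma$ of $\CC^4$ satisfies the permutation condition (iii) for every $(A) \in G$, hence for every $(A) \in H$, so $\Gamma$ is also an $H$-decomposition. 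Therefore a $G$-decomposition would contradict the primitivity of $H$.

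The remaining step, which I expect to be the main technical point, is to verify that every group $G$ appearing in Diagram \ref{fig:13° - 21° Isomorphic} contains a subgroup isomorphic to $\ZZ_2^4 \rtimes \ZZ_5$. For the split cases $\ZZ_2^4 \rtimes D_{10}$, $\ZZ_2^4 \rtimes \mathfrak{A}_5$, and $\ZZ_2^4 \rtimes \mathfrak{A}_6$ this is transparent, since each of $D_{10}$, $\mathfrak{A}_5$, and $\mathfrak{A}_6$ contains a copy of $\ZZ_5$ acting on the same $\ZZ_2^4$. For the non-split extensions $\ZZ_2^4.(\ZZ_5 \rtimes \ZZ_4)$, $\ZZ_2^4.\mathfrak{S}_5$, and $\ZZ_2^4.\mathfrak{S}_6$ I would pass to the preimage of a subgroup of order $5$ in the quotient: this preimage is an extension of $\ZZ_5$ by $\ZZ_2^4$, and since $\gcd(16,5)=1$ the Schur--Zassenhaus theorem guarantees that the extension splits, producing a subgroup isomorphic to $\ZZ_2^4 \rtimes \ZZ_5$.

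Combining this containment with the base case and the monotonicity lemma shows every such $G$ is primitive, after which the reduction in the first paragraph finishes the proof. The only delicate appeal is to the completeness of Blichfeldt's list---namely that the relabeled classes $(13^\circ)$--$(21^\circ)$ account for \emph{all} primitive subgroups of $\PGL_4(\CC)$ of the relevant isomorphism types. This is exactly the feature invoked for the order-$160$ group in the discussion preceding the statement, and the plan is to apply it uniformly across the nine nodes of the diagram.
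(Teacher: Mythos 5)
Your proposal follows essentially the same route as the paper: the base case that every copy of $\ZZ_2^4\rtimes\ZZ_5$ in $\PGL_4(\CC)$ is primitive (via \cite[Corollary 5.2]{Ivan2019}), the observation that primitivity passes from a subgroup to the ambient group, and Blichfeldt's classification to pin down the conjugacy class. The only difference is that you make explicit (via Schur--Zassenhaus for the non-split extensions) the containment of $\ZZ_2^4\rtimes\ZZ_5$ in each node of the diagram, a step the paper dismisses with ``this argument works for all groups in the diagram.''
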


\section{Invariant Subspaces of Quartics}\label{QuarticInvariants}


In this section, we find for each group $G$ in the previous two sections the collection of all (maximal) $G$-invariant subspace of quartics. Since all these linear subspaces are finite-dimensional, then it is enough to show a finite basis by each of such subspaces. For each one of these basis $\{p_1,p_2,\ldots,p_r \}$ we show a criterion over the scalar $\lambda_1,\lambda_2,\ldots,\lambda_r$ to determinate which linear combinations $\lambda_1 p_1 + \lambda_2 + \cdots + \lambda_r p_r$ are non-singular. In particular, if the basis consists of only one polynomial $p$,  we will tag this polynomial with $(*)$ to mean \textit{$p$ is singular}, by the contrary, if $p$ is non-singular, then we will tag $p$ with $(\blacklozenge)$. For example, the polynomial $x_1^2 x_2^2+x_0^2 x_3^2 -2x_0 x_1 x_3 x_2$ in \ref{PrimerSub1°} is singular, but the polynomial $2 x_0^4+ 6 x_1 x_2 x_3 x_0+x_1 x_3^3+x_1^3 x_2+x_2^3 x_3$ in \ref{Non-singularPSL(2,7)} is non-singular.

\textbf{Respect to Diagram \ref{fig:1° - 12°}.} 

\subsection{Invariant subspaces of quartics by (1°)}\label{QuarticsInvariantby(1°)}

\subsubsection{First subspace  $I_1$}\label{PrimerSub1°}

\[
x_1^2 x_2^2+x_0^2 x_3^2 -2x_0 x_1 x_3 x_2. \tag{*}
\]

\subsubsection{Second subspace $\operatorname{I}_2$}\label{SegundoSub1°}

\[
x_0^4+x_1^4+x_2^4+x_3^4 -8 x_0 x_1 x_2 x_3 -2 \left(x_1^2 x_2^2+x_0^2 x_3^2\right)+2 i \sqrt{3} \left(x_0^2 x_1^2+x_3^2 x_1^2+x_0^2 x_2^2+x_2^2 x_3^2\right). 
\tag{*}
\]

\subsubsection{Third subspace $\operatorname{I}_3$}

\[
x_0^4+x_1^4+x_2^4+x_3^4+8 x_0 x_1 x_2 x_3 +2 \left(x_1^2 x_2^2+x_0^2 x_3^2\right)+2 i \sqrt{3} \left(x_0^2 x_1^2-x_3^2 x_1^2-x_0^2 x_2^2+x_2^2 x_3^2\right). \tag{*}
\]

\subsubsection{Fourth subspace $\operatorname{I}_4$}

\[
x_0^4+x_1^4+x_2^4+x_3^4 + 8 x_1 x_2 x_3 x_0 + 2 \left(x_1^2 x_2^2+x_0^2 x_3^2\right)-2 i \sqrt{3} \left(x_0^2 x_1^2-x_3^2 x_1^2-x_0^2 x_2^2+x_2^2 x_3^2\right). \tag{*}
\]

\subsubsection{Fifth subspace $\operatorname{I}_5$}\label{QuintoSub1°}

\[
x_0^4+x_1^4+x_2^4+x_3^4 -8 x_1 x_2 x_3 x_0 -2 \left(x_1^2 x_2^2+x_0^2 x_3^2\right)-2 i \sqrt{3} \left(x_0^2 x_1^2+x_3^2 x_1^2+x_0^2 x_2^2+x_2^2 x_3^2\right). \tag{*}
\]

For the sake of clarity, we want to mention that it is unnecessary to compute invariant quartics for the remaining groups on Diagram \ref{fig:1° - 12°}. Any other group in Diagram \ref{fig:1° - 12°} contains (1°), and the invariant quartics by (1°) are singular (*). Therefore, the other groups in this diagram are done.












\textbf{Respect to Diagram \ref{fig:13° - 21°}.} 

\subsection{Invariant subspaces of quartics by (13°)}

\subsubsection{First subspace $\operatorname{XIII}_1$}\label{PrimerSub13°}

\[
k = x_0^4+x_1^4+x_2^4+x_3^4+6 \left(x_0^2 x_1^2-x_2^2 x_1^2+x_3^2 x_1^2+x_0^2 x_2^2-x_0^2 x_3^2+x_2^2 x_3^2\right). 
\tag{$\blacklozenge$}
\]

\subsubsection{Second subspace $\operatorname{XIII}_2$}
\[
\begin{split}
    &-\frac{-1+4\mu-\mu^2-2\mu^3+2\mu^4}{\mu_5^2-1}(x_0^4+x_1^4+x_2^4+x_3^4)-6\frac{\mu^2-\mu^3-\mu^4}{1+\mu}(x_0^2x_2^2+x_1^2x_3^2) \\ &+6\frac{1+\mu^2}{\mu^2-1}(x_0^2x_1^2+x_2^2x_3^2)-6\frac{1-\mu^2+2\mu^4}{\mu^2-1}(x_0^2x_3^2+x_1^2x_2^2)+24x_0x_1x_2x_3. 
\end{split}
\tag{*}
\]

\subsubsection{Third subspace $\operatorname{XIII}_3$}

\[
\begin{split}
    &(1+2\mu+4\mu^2+2\mu^3+\mu^4)(x_0^4+x_1^4+x_2^4+x_3^4) + 6(1+\mu^4)(x_0^2x_1^2+x_2^2x_3^2) \\ &-6(1-2\mu^3-\mu^4)(x_0^2x_3^2+x_1^2x_2^2)+6(\mu^4-2\mu-1)(x_0^2x_2^2+x_1^2x_3^2)+24(\mu^4-1)x_0x_1x_2x_3. 
\end{split}
\tag{*}
\]

\subsubsection{Fourth subspace $\operatorname{XIII}_4$}

\[
\begin{split}
     &-(1+2\mu+4\mu^2+2\mu^3+\mu^4)(x_0^4+x_1^4+x_2^4+x_3^4)-6(1+\mu^4)(x_0^2x_1^2+x_2^2x_3^2) \\ &-6(1-2\mu^3-\mu^4)(x_0^2x_2^2+x_1^2x_3^2)-6(1+2\mu-\mu^4)(x_0^2x_3^2+x_1^2x_2^2)+24(-1+\mu^4)x_0x_1x_2x_3. 
\end{split}
\tag{*}
\]

\subsubsection{Fifth subspace $\operatorname{XIII}_5$}

\[
\begin{split}
    &(-1+4\mu-\mu^2-2\mu^3+2\mu^4)(x_0^4+x_1^4+x_2^4+x_3^4) -6(1+\mu^2)(x_0^2x_1^2+x_2^2x_3^2) \\ &-6(1-\mu^2+2\mu^4)(x_0^2x_2^2+x_1^2x_3^2)-6(1-\mu^2+2\mu^3)(x_0^2x_3^2+x_1^2x_2^2) + 24(-1+\mu^2)x_0x_1x_2x_3. 
\end{split}
\tag{*}
\]

\subsection{Invariant subspaces of quartics by (14°)}

\subsubsection{$\operatorname{XIV}_1$} It is the subspace \ref{PrimerSub13°}.  

\subsection{Invariant subspaces of quartics by (15°)}

\subsubsection{$\operatorname{XV}_1$} It is the subspace \ref{PrimerSub13°}.  

\subsection{Invariant subspaces of quartics by (16°)}\label{QuarticInvarianby(16°)} There is no non-trivial subspace of invariant quartics by this group.  The last statement implies that there is also no non-trivial subspace of invariant quartics by groups (18°), (20°), and (21°).
\subsection{Invariant subspaces of quartics by (17°)}

\subsubsection{$\operatorname{XVII}_1$}  It is the subspace \ref{PrimerSub13°}.

\subsection{Invariant subspaces of quartics by (19°)}\label{QuarticInvariantby(19°)}
 
\subsubsection{$\operatorname{XIX_1}$} It is the subspace \ref{PrimerSub13°}.

\textbf{
Respect to Diagram \ref{fig:(A), (C) - (F), (G) y (K)}.}

\subsection{Invariant subspaces of quartics by (A)} 

\subsubsection{$\operatorname{A}_1$}\label{ÚnicoSub(A)}

\[
\begin{split}
    h_0 & = \frac{x_0^4}{2 \sqrt{3}}+\frac{1}{3} x_1 x_0^3+x_2 x_3 x_0^2+\frac{1}{3} x_1^3 x_0+\frac{1}{3} \sqrt{2} x_2^3 x_0+\sqrt{\frac{2}{3}} x_3^3 x_0 \\ & +\sqrt{\frac{2}{3}} x_1 x_2^3-\frac{1}{3} \sqrt{2} x_1 x_3^3+x_1^2 x_2 x_3-\frac{x_1^4}{2 \sqrt{3}}, \\
    h_1 & = \frac{x_0^4}{6}+\frac{x_1 x_0^3}{\sqrt{3}}-\frac{1}{2} x_1^2 x_0^2+\sqrt{3} x_2 x_3 x_0^2+\frac{1}{3} \sqrt{2} x_3^3 x_0+x_1 x_2 x_3 x_0 \\ & +\frac{2}{3} \sqrt{2} x_1 x_2^3-\sqrt{\frac{2}{3}} x_1 x_3^3-\frac{1}{2} x_2^2 x_3^2-\frac{x_1^4}{3}.
\end{split}
\]

Denote by  $D(\lambda_0,\lambda_1)$ the discriminant  of the linear combination $\lambda_0 h_0 + \lambda_1 h_1$, it is a homogeneous polynomial in the parameters $\lambda_0,\lambda_1$ of degree $(3+1) \cdot (4-1)^3 = 108$. Since the zero locus of $D(\lambda_0,\lambda_1)$ lies in $\PP^1$, then there are finite many non-trivial solutions of $D(\lambda_0,\lambda_1) = 0$. Moreover, the number of solutions is bounded by $108$, the degree of $D(\lambda_0,\lambda_1)$. Some of the points $[\lambda_0 : \lambda_1] \in \PP^1$ such that $\{\lambda_0 h_0 + \lambda_1 h_1 = 0 \}$ is a singular invariant surface by $(A)$, these points $[\lambda_0 : \lambda_1]$ are the following:
\[
\begin{split}
    \left[ - 2 \sqrt{2} : \sqrt{i \sqrt{15} -1} \right],\, \left[ - 2 \sqrt{2} : \sqrt{- i \sqrt{15} -1} \right], \, [1 : - \sqrt{3}], \,  \left[ 47 :8 \left(1-4 \sqrt{3}\right) \right]  \\
    \left[ 47 : - 8 \left(1 + 4 \sqrt{3}\right) \right], \, \left[ 4 : -\sqrt{3}+i \sqrt{5} \right], \; \text{and} \; \left[ 4 : -\sqrt{3} - i \sqrt{5} \right]. 
\end{split}
\]
We found these points numerically with the help of the software \texttt{Mathematica}. 
Unfortunately, to find all zeros of $D(\lambda_0,\lambda_1)$ numerically, or to compute the discriminant $D(\lambda_0,\lambda_1)$ by Macaulay's method is computationally heavy. 

\subsection{Invariant subspaces of quartics by (G)} 

\subsubsection{$\operatorname{G}_1$} 

\[
\begin{split}
    f_0 & = -\frac{x_0^4}{12}+\frac{x_1 x_0^3}{2 \sqrt{3}}-\frac{1}{2} x_1^2 x_0^2+\frac{1}{2} \sqrt{3} x_2 x_3 x_0^2+x_1 x_2 x_3 x_0-\frac{x_3^3 x_0}{3 \sqrt{2}}-\frac{x_1^3 x_0}{2 \sqrt{3}}-\frac{x_2^3 x_0}{\sqrt{6}} \\ & +\frac{x_1 x_2^3}{3 \sqrt{2}}-\frac{1}{2} x_2^2 x_3^2-\frac{1}{2} \sqrt{3} x_1^2 x_2 x_3-\frac{x_1^4}{12}-\frac{x_1 x_3^3}{\sqrt{6}}.
\end{split}
\tag{$\blacklozenge$}
\]

\subsubsection{$\operatorname{G}_2$}

\[
\begin{split}
    f_1 & = \frac{x_0^4}{2 \sqrt{3}}+\frac{1}{3} x_1 x_0^3+x_2 x_3 x_0^2+\frac{1}{3} x_1^3 x_0+\frac{1}{3} \sqrt{2} x_2^3 x_0+\sqrt{\frac{2}{3}} x_3^3x_0 \\ & +\sqrt{\frac{2}{3}} x_1 x_2^3-\frac{1}{3} \sqrt{2} x_1 x_3^3+x_1^2 x_2 x_3-\frac{x_1^4}{2 \sqrt{3}}.
\end{split}
\tag{$\blacklozenge$}
\]

\subsection{Invariant subspaces of quartics by (C)} There is no  a non-trivial subspace of invariant quartics  by this group.

\subsection{Invariant subspaces of quartics by (E)}

\subsubsection{$\operatorname{E}_1$}\label{Non-singularPSL(2,7)}

\[
q = 2 x_0^4+ 6 x_1 x_2 x_3 x_0+x_1 x_3^3+x_1^3 x_2+x_2^3 x_3. 
\tag{$\blacklozenge$}
\]

\

\textbf{
Respect to Diagram \ref{fig:(B) y (H)}.}

\subsection{Invariant subspaces of quartics by (B)}

\subsubsection{$\operatorname{B}_1$}\label{UnicoSub(B)}

\[
\begin{split}
    g_0 &= \frac{x_0^4}{\sqrt{15}}-\frac{x_2 x_3 x_0^2}{\sqrt{15}}-\frac{x_1^2 x_0^2}{2 \sqrt{15}}-\frac{1}{3} x_1^3 x_0-\frac{1}{3} x_2^3 x_0-\frac{1}{3} x_3^3 x_0+x_1 x_2 x_3 x_0 \\ &+\frac{1}{6} \sqrt{\frac{5}{3}} x_1^4-\frac{1}{3} \sqrt{\frac{5}{3}} x_1 x_2^3-\frac{1}{3} \sqrt{\frac{5}{3}} x_1 x_3^3+\frac{1}{2} \sqrt{\frac{5}{3}} x_2^2 x_3^2, \\
    g_1 &= \frac{x_0^4}{4}+\frac{1}{2} x_1^2 x_0^2+x_2 x_3 x_0^2+\frac{x_1^4}{4}+x_2^2 x_3^2+x_1^2 x_2 x_3.
\end{split}
\]

The linear combination $\lambda_0 g_0 + \lambda_1 g_1$ is non-singular if and only if 
\begin{center}
    $-20 \lambda_1 \lambda_0^4+107 \sqrt{15} \lambda_1^2 \lambda_0^3+1140 \lambda_1^3 \lambda_0^2+180 \sqrt{15} \lambda_1^4 \lambda_0 \neq 0$
\end{center}


\subsection{Invariant subspaces of quartics by (H)}

\subsubsection{$\operatorname{H}_1$} It is the subspace \ref{UnicoSub(B)}.

\vspace{5mm}

\textbf{The non-primitive groups.}

\subsection{Invariant subspaces of quartics by (Q\textsubscript{1})}

\subsubsection{$\operatorname{Q}_{11}$}
\[
\begin{split}
    & x_1^4+4x_2 x_3 x_1^2+4x_2^2 x_3^2,\, x_1^2 x_0^2+2x_2 x_3 x_0^2, \\ 
    & x_0^4.
\end{split}
\tag{*}
\]
Let's note that all linear combinations of these three polynomials are singular, since each one of these has degree  less than $2$ in the variable $x_2$. Then each polynomial in this subspace has degree less than $2$ in the variable $x_2$ and by Lemma \ref{MSC} we get that all polynomials in this subspace are singulars. 

\subsection{Invariant subspaces of quartics by (Q\textsubscript{4})}

\subsubsection{$\operatorname{Q}_{41}$}
\[
    -\frac{1}{2} x_0^2 x_2^2+x_0 x_1 x_3 x_2-\frac{1}{2} x_1^2 x_3^2. \tag{*}
\]

\subsection{Invariant subspaces of quartics by (Q\textsubscript{5})} There is no   non-trivial subspace of  invariant quartics by this group.

\subsection{Invariant subspaces of quartics by (R\textsubscript{3})} There is no   non-trivial subspace of  invariant quartics by this group.

\subsection{Invariant subspaces of quartics by (Q\textsubscript{7})} There is no   non-trivial subspace of  invariant quartics by this group.

\subsection{Invariant subspaces of quartics by (P)}

\subsubsection{$\operatorname{P}_1$}\label{P}
\[
\begin{split}
    p_0 & = x_0^4, \\
    p_1 & = x_2 x_1^3+x_3^3 x_1+x_2^3 x_3.
\end{split}
\]

The linear combination $\lambda_0 p_0 + \lambda_1 p_1$ is non-singular if and only if $\lambda_0 \lambda_1 \neq 0$. Certainly, by  Lemma \ref{MSC} it is necessary that $\lambda_1$ and $\lambda_2$ are non-zero. Now, if $b \in \PP^3$ is a singular point of $\lambda_0 p_0 + \lambda_1 p_1$, then we find a singular point $\hat{b} \in \PP^2$  of $p_1$, but $\{ p_1 = 0 \}$ is the smooth Klein quartic curve, contradiction. Therefore, $\lambda_0$ and $\lambda_1$ are non-zero is a criterion of non-singularity of $\lambda_0 p_0 + \lambda_1 p_1$, this criterion is equivalent to $\lambda_0 \lambda_1 \neq 0$.

\begin{remark}
 All computations have been made with the help of the software \texttt{Mathematica}. 
\end{remark}

\section{Results}\label{Results}

Here we present the results obtained in the last two sections. It means, we lists all the smooth quartic surfaces by the finite primitive groups of $\operatorname{PGL}_4(\mathbb{C})$.

\begin{theorem}[Diagram \ref{fig:1° - 12°}]

There are no smooth quartic surfaces invariant by at least one primitive subgroup listed in Diagram \ref{fig:1° - 12°}.
\end{theorem}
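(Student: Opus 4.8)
The plan is to collapse the entire diagram onto its minimal vertex, the group $(1^\circ)$. First I would record the structural fact read off from Figure \ref{fig:1° - 12°}: the group $(1^\circ)$ sits at the bottom of the inclusion lattice, so every group appearing in Diagram \ref{fig:1° - 12°} contains a conjugate of $(1^\circ)$, each arrow in the diagram encoding an inclusion up to conjugacy. This is exactly the remark already made after the list of subspaces in Section \ref{QuarticsInvariantby(1°)}, and it is the only place where the shape of the diagram is used.

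Next I would invoke the monotonicity of invariance. Since $\operatorname{PAut}(f)$ is a group, if $G$ is any group in the diagram and $(1^\circ) \subseteq G$, then $G \subset \operatorname{PAut}(f)$ forces $(1^\circ) \subset \operatorname{PAut}(f)$; that is, every $G$-invariant quartic form is a fortiori $(1^\circ)$-invariant. Hence it suffices to prove that there is no smooth $(1^\circ)$-invariant quartic surface, and the statement for all twelve groups of the diagram follows at once.

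Then I would appeal to the computation already carried out for $(1^\circ)$. By Theorem \ref{PrincipalInvariantSubspaceTheorem}, the maximal $(1^\circ)$-invariant subspaces of $\operatorname{Form}_{3,4}$ are precisely the five spaces $\operatorname{I}_1,\dots,\operatorname{I}_5$ displayed in Section \ref{QuarticsInvariantby(1°)} (from \ref{PrimerSub1°} through \ref{QuintoSub1°}). Each of these is one-dimensional, spanned by a single polynomial carrying the tag $(*)$, i.e.\ a singular form. Because the discriminant is homogeneous of degree $(n+1)(d-1)^n$ in the coefficients, one has $\operatorname{Disc}(\lambda p) = \lambda^{(n+1)(d-1)^n}\operatorname{Disc}(p)$, so every nonzero scalar multiple of a singular form is again singular. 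Consequently every nonzero element of $\operatorname{I}_1 \cup \dots \cup \operatorname{I}_5$ is singular.

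Finally, by Corollary \ref{Correspondence}, the smooth $(1^\circ)$-invariant quartic surfaces are in bijection with the non-singular forms lying in the union of the maximal $(1^\circ)$-invariant subspaces; as just observed, this union contains no non-singular form. Therefore there is no smooth $(1^\circ)$-invariant quartic, and by the reduction above none invariant by any group of Diagram \ref{fig:1° - 12°}. I do not anticipate a genuine obstacle here: the real content is the prior classification of $\operatorname{I}_1,\dots,\operatorname{I}_5$ and the verification that each generator is singular, both of which are established in Section \ref{QuarticsInvariantby(1°)}; the present statement is then immediate, the only point requiring care being the correct reading of the inclusion structure of the diagram.
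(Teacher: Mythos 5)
Your proposal is correct and follows essentially the same route as the paper: the paper's proof is precisely to observe that every group in Diagram \ref{fig:1° - 12°} contains $(1^\circ)$, so any invariant quartic must lie in one of the five one-dimensional maximal $(1^\circ)$-invariant subspaces $\operatorname{I}_1,\dots,\operatorname{I}_5$, each spanned by a singular form. Your added remarks (homogeneity of the discriminant under scaling, the appeal to Corollary \ref{Correspondence}) only make explicit what the paper leaves implicit.
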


\begin{proof}
See Section \ref{QuarticInvariants}, subsection \ref{PrimerSub1°}.
\end{proof}


\begin{theorem}[Diagram \ref{fig:13° - 21°}]

The unique smooth quartic surface invariant by at least one primitive group on Diagram \ref{fig:13° - 21°} is given by the polynomial $$ k=x_0^4+x_1^4+x_2^4+x_3^4+6 \left(x_0^2 x_1^2-x_2^2 x_1^2+x_3^2 x_1^2+x_0^2 x_2^2-x_0^2 x_3^2+x_2^2 x_3^2\right).$$ Moreover, such surface is invariant by the group (19°), which is isomorphic to $\ZZ_2^4.\mathbb{S}_5$, and by its subgroups (13°), (14°), (15°), (17°), and (19°). On the other hand, the primitive groups (16°), (18°), (20°), and (21°) do not fix any smooth quartic surface. 

\end{theorem}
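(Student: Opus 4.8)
The plan is to reduce the entire statement to the explicit computation of maximal invariant subspaces carried out in Section~\ref{QuarticInvariants}, organised along the inclusion lattice of Diagram~\ref{fig:13° - 21°}. By Corollary~\ref{Correspondence}, for a finite $G<\PGL_4(\CC)$ the smooth $G$-invariant quartic surfaces correspond bijectively to the non-singular forms lying in the union of the maximal $G$-invariant subspaces of $\operatorname{Form}_{3,4}$, and by Theorem~\ref{PrincipalInvariantSubspaceTheorem} these are precisely the non-zero kernels $\ker(T_k)$. Hence the theorem becomes a finite check, group by group, of which invariant forms are non-singular, and all the relevant subspaces are already tabulated in Section~\ref{QuarticInvariants}. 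The one structural fact I would lean on throughout is monotonicity: if $H\subset G$ then every $G$-invariant form is $H$-invariant, so the invariant subspaces of $G$ are contained in those of $H$. This lets me analyse the minimal group $(13^\circ)$ once and then propagate both existence and non-existence upward.

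First I would treat $(13^\circ)$, the bottom group, which has the richest invariant theory. By the list of invariant subspaces for $(13^\circ)$ in Section~\ref{QuarticInvariants}, starting at~\ref{PrimerSub13°}, its maximal invariant subspaces are the five one-dimensional spaces $\operatorname{XIII}_1,\dots,\operatorname{XIII}_5$, of which only $\operatorname{XIII}_1=\langle k\rangle$ is spanned by a non-singular form (tag $\blacklozenge$), the other four carrying the tag $(*)$. As each subspace is one-dimensional, the unique smooth $(13^\circ)$-invariant quartic is $\{k=0\}$ up to scalar. For the groups $(14^\circ),(15^\circ),(17^\circ),(19^\circ)$, each of which contains $(13^\circ)$, monotonicity confines their invariant quartics to $\operatorname{XIII}_1,\dots,\operatorname{XIII}_5$, and the computations labelled $\operatorname{XIV}_1,\operatorname{XV}_1,\operatorname{XVII}_1,\operatorname{XIX}_1$ show that the extra generators collapse the invariants to the single subspace $\langle k\rangle$. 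Thus $\{k=0\}$ is the unique smooth invariant quartic for each of them, and in particular for the top group $(19^\circ)\cong\ZZ_2^4.\mathfrak{S}_5$.

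Next I would rule out the remaining primitive groups $(16^\circ),(18^\circ),(20^\circ),(21^\circ)$. By subsection~\ref{QuarticInvarianby(16°)}, $(16^\circ)$ admits no non-trivial invariant subspace of quartics. Reading Diagram~\ref{fig:13° - 21°} one has $(16^\circ)\subset(18^\circ)$, $(16^\circ)\subset(20^\circ)$, and hence $(16^\circ)\subset(21^\circ)$ through either; by monotonicity a non-zero invariant quartic of any of these would restrict to a non-zero $(16^\circ)$-invariant quartic, contradicting the vanishing just noted. Therefore none of $(16^\circ),(18^\circ),(20^\circ),(21^\circ)$ fixes any quartic, smooth or singular, and assembling the two cases yields the stated dichotomy.

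The genuine content, and the step I expect to be the main obstacle, is the non-singularity bookkeeping behind the tags in~\ref{PrimerSub13°}: that $k$ is smooth while $\operatorname{XIII}_2,\dots,\operatorname{XIII}_5$ are singular. Lemma~\ref{MSC} is of little help here, since all five forms contain every pure power $x_i^4$ and no monomial of the shape $x_i^3x_j$ with $j\neq i$, so its necessary condition is satisfied vacuously (take $j=i$) and does not separate the smooth form from the singular ones. For $k$ I would verify smoothness by the Jacobian criterion directly, most cleanly after the explicit change of coordinates carrying $k$ to the Burnside quartic $\{x_0^4+x_1^4+x_2^4+x_3^4+12x_0x_1x_2x_3=0\}$: there the vanishing of all four partials forces $x_i^3=-3x_jx_kx_l$ for all $i$, whose only common solution is the origin. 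For the four $\mu$-dependent forms I would exhibit an explicit singular point or, when no clean choice presents itself, certify $\Disc=0$ by Macaulay's reduction specialised at each form. Verifying that the candidate points annihilate both the form and its gradient is the one place where honest computation, as performed with \texttt{Mathematica}, cannot be avoided.
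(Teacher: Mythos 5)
Your proposal is correct and follows essentially the same route as the paper: the positive part is read off from the tabulated maximal invariant subspaces in Section~\ref{QuarticInvariants} (only $\operatorname{XIII}_1=\langle k\rangle$ carries a non-singular form), and the groups $(16^\circ),(18^\circ),(20^\circ),(21^\circ)$ are excluded by noting that each contains $(16^\circ)$, which admits no non-trivial invariant quartic subspace. Your additional remarks on the monotonicity of invariance under inclusion and on why Lemma~\ref{MSC} cannot separate $k$ from $\operatorname{XIII}_2,\dots,\operatorname{XIII}_5$ merely make explicit what the paper leaves implicit (and delegates to \texttt{Mathematica}).
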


\begin{proof}
By Section \ref{QuarticInvariants} it is enough to prove that none of groups (16°), (18°), (20°), or (21°) fixes any smooth surface of degree $4$. If $X=\{ f = 0\}$ is a smooth quartic surface invariant by some of these groups, then by Diagram \ref{fig:13° - 21°} $X$ must be invariant by (16°), thus $f$ is a non-singular quartic form invariant by (16°), contradiction with \ref{QuarticInvarianby(16°)}.
\end{proof}


\begin{theorem}[Diagram \ref{fig:(A), (C) - (F), (G) y (K)}]
The smooth quartic surfaces fixed by primitives groups on Diagram \ref{fig:(A), (C) - (F), (G) y (K)} are the following: 

\begin{enumerate}
    \item[(A)] All the (A)-invariant quartic surfaces are in the pencil of quartics
 \[
\begin{split}
    \Big\{\lambda_0\Big( &  \frac{x_0^4}{2 \sqrt{3}}+\frac{1}{3} x_1 x_0^3+x_2 x_3 x_0^2+\frac{1}{3} x_1^3 x_0+\frac{1}{3} \sqrt{2} x_2^3 x_0+\sqrt{\frac{2}{3}} x_3^3 x_0 \\ & +\sqrt{\frac{2}{3}} x_1 x_2^3-\frac{1}{3} \sqrt{2} x_1 x_3^3+x_1^2 x_2 x_3-\frac{x_1^4}{2 \sqrt{3}}\Big)+ \\
    \lambda_1\Big(  & \frac{x_0^4}{6}+\frac{x_1 x_0^3}{\sqrt{3}}-\frac{1}{2} x_1^2 x_0^2+\sqrt{3} x_2 x_3 x_0^2+\frac{1}{3} \sqrt{2} x_3^3 x_0+x_1 x_2 x_3 x_0 \\ & +\frac{2}{3} \sqrt{2} x_1 x_2^3-\sqrt{\frac{2}{3}} x_1 x_3^3-\frac{1}{2} x_2^2 x_3^2-\frac{x_1^4}{3}\Big)=0 \Big\}.
\end{split}
\]
  where the singular  the fibers are  finitely many and at least  must include the fibers over the following points: 
 \[
\begin{split}
    \left[ - 2 \sqrt{2} : \sqrt{i \sqrt{15} -1} \right],\, \left[ - 2 \sqrt{2} : \sqrt{- i \sqrt{15} -1} \right], \, [1 : - \sqrt{3}], \,  \left[ 47 :8 \left(1-4 \sqrt{3}\right) \right]  \\
    \left[ 47 : - 8 \left(1 + 4 \sqrt{3}\right) \right], \, \left[ 4 : -\sqrt{3}+i \sqrt{5} \right], \; \text{and} \; \left[ 4 : -\sqrt{3} - i \sqrt{5} \right]. 
\end{split}
\]

 
 \item[(G)] There are only two smooth quartic surfaces invariant by (G), they are: 
 \[
\begin{split}
   \Big\{  &-\frac{x_0^4}{12}+\frac{x_1 x_0^3}{2 \sqrt{3}}-\frac{1}{2} x_1^2 x_0^2+\frac{1}{2} \sqrt{3} x_2 x_3 x_0^2+x_1 x_2 x_3 x_0-\frac{x_3^3 x_0}{3 \sqrt{2}}-\frac{x_1^3 x_0}{2 \sqrt{3}}-\frac{x_2^3 x_0}{\sqrt{6}} \\ & +\frac{x_1 x_2^3}{3 \sqrt{2}}-\frac{1}{2} x_2^2 x_3^2-\frac{1}{2} \sqrt{3} x_1^2 x_2 x_3-\frac{x_1^4}{12}-\frac{x_1 x_3^3}{\sqrt{6}}=0 \Big\},
\end{split}
\]
and 

\[
\begin{split}
    \Big\{ & \frac{x_0^4}{2 \sqrt{3}}+\frac{1}{3} x_1 x_0^3+x_2 x_3 x_0^2+\frac{1}{3} x_1^3 x_0+\frac{1}{3} \sqrt{2} x_2^3 x_0+\sqrt{\frac{2}{3}} x_3^3x_0 \\ & +\sqrt{\frac{2}{3}} x_1 x_2^3-\frac{1}{3} \sqrt{2} x_1 x_3^3+x_1^2 x_2 x_3-\frac{x_1^4}{2 \sqrt{3}}=0 \Big\}.
\end{split}
\]

\item[(E)] The unique smooth quartic invariant surface by the group (E) is: 
$$\{2 x_0^4+ 6 x_1 x_2 x_3 x_0+x_1 x_3^3+x_1^3 x_2+x_2^3 x_3=0\}.$$

\end{enumerate}

In addition, the primitive groups (C), (D), (K), and (F) do not fix any smooth quartic surface.
\end{theorem}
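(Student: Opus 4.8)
The plan is to read off the answer case by case from the explicit computations of Section \ref{QuarticInvariants}, passing each maximal invariant subspace through Corollary \ref{Correspondence}, which identifies the smooth invariant quartics with the non-singular forms lying in the finite union of those subspaces. The four non-empty cases (A), (G), (E) I would handle directly from their invariant subspaces, and the four empty cases (C), (D), (K), (F) I would dispose of with a single subgroup argument. Throughout, the substantive content has already been produced in Section \ref{QuarticInvariants}; the work here is to assemble it and to control smoothness.

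For (A) (isomorphic to $\mathfrak{A}_5$), subsection \ref{ÚnicoSub(A)} exhibits the unique maximal invariant subspace as the pencil $\langle h_0,h_1\rangle$, so by Corollary \ref{Correspondence} every (A)-invariant quartic is a member $\lambda_0 h_0+\lambda_1 h_1$. A member is smooth precisely when the discriminant $D(\lambda_0,\lambda_1)$ is non-zero. First I would note that $D$ is a non-zero homogeneous polynomial of degree $(3+1)(4-1)^3=108$ in $(\lambda_0,\lambda_1)$: it is not identically zero because $h_0$ is literally the non-singular form $f_1$ found for (G), so the fibre over $[1:0]$ is already smooth. Hence $\{D=0\}\subset\PP^1$ is finite with at most $108$ points, and all but these finitely many fibres are smooth. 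The points listed in the statement are those isolated numerically. The main obstacle lies exactly here: finiteness is immediate, but extracting the complete zero set of the degree-$108$ discriminant in closed form via Macaulay's method is computationally prohibitive, which is why the list is given only as a lower bound on the singular fibres.

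For (G) (isomorphic to $\mathfrak{S}_5$), Section \ref{QuarticInvariants} produces exactly two maximal invariant subspaces, $\operatorname{G}_1=\langle f_0\rangle$ and $\operatorname{G}_2=\langle f_1\rangle$, each one-dimensional, and both generators carry the tag $(\blacklozenge)$, i.e.\ are non-singular. By Corollary \ref{Correspondence} the smooth (G)-invariant quartics are therefore precisely $\{f_0=0\}$ and $\{f_1=0\}$, and since these exhaust $\bigcup\Gamma$ there are no others. For (E) (isomorphic to $\operatorname{PSL}_2(\mathbb{F}_7)$), subsection \ref{Non-singularPSL(2,7)} gives a single one-dimensional maximal invariant subspace $\operatorname{E}_1=\langle q\rangle$ with $q$ non-singular, so $\{q=0\}$ is the unique smooth invariant quartic; here smoothness is the clean statement verified in that subsection.

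Finally, for (C), (D), (K), (F) I would reduce everything to (C). Section \ref{QuarticInvariants} records that (C) (isomorphic to $\mathfrak{A}_6$) admits no non-trivial invariant subspace of quartics, so it fixes no quartic surface whatsoever. By the inclusions encoded in Diagram \ref{fig:(A), (C) - (F), (G) y (K)}, the group (C) is a subgroup of (D) and of (K), and therefore also of (F) through (K). Any quartic fixed by one of these larger groups is in particular fixed by (C); since (C) fixes none, neither do (D), (K), or (F). This finishes the four empty cases and, together with the three preceding paragraphs, the theorem.
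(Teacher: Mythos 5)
Your proposal is correct and follows essentially the same route as the paper, whose proof simply cites Diagram \ref{fig:(A), (C) - (F), (G) y (K)} and the computations of Section \ref{QuarticInvariants}; you merely make explicit the assembly via Corollary \ref{Correspondence}, the subgroup reduction of the empty cases (C), (D), (K), (F) to (C), and the useful observation that $h_0=f_1$ is non-singular, which guarantees the discriminant of the pencil in case (A) is not identically zero.
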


\begin{proof}
It follows immediately by Diagram \ref{fig:(A), (C) - (F), (G) y (K)} and Section \ref{QuarticInvariants}.
\end{proof}

\begin{theorem}[Diagram \ref{fig:(B) y (H)}]
The  quartic surfaces invariant by primitives groups (B), and (H) are given by the pencil of quartic surfaces

\[
\begin{split}
   \Big\{\lambda_0&( \frac{x_0^4}{\sqrt{15}}-\frac{x_2 x_3 x_0^2}{\sqrt{15}}-\frac{x_1^2 x_0^2}{2 \sqrt{15}}-\frac{1}{3} x_1^3 x_0-\frac{1}{3} x_2^3 x_0-\frac{1}{3} x_3^3 x_0+x_1 x_2 x_3 x_0 \\ &+\frac{1}{6} \sqrt{\frac{5}{3}} x_1^4-\frac{1}{3} \sqrt{\frac{5}{3}} x_1 x_2^3-\frac{1}{3} \sqrt{\frac{5}{3}} x_1 x_3^3+\frac{1}{2} \sqrt{\frac{5}{3}} x_2^2 x_3^2)+ \\
   \lambda_1 & (\frac{x_0^4}{4}+\frac{1}{2} x_1^2 x_0^2+x_2 x_3 x_0^2+\frac{x_1^4}{4}+x_2^2 x_3^2+x_1^2 x_2 x_3)=0 \Big\},
\end{split}
\]

where the singular ones are the fibers on the five points of $\mathbb{P}^{1}_{\lambda_0,\lambda_1}$
$$[0:1],\,[1:0],\, [\sqrt{15} : -4],\, [2 \sqrt{15} : -5], \, \text{and} \, [6 \sqrt{15} : 1] .$$
\end{theorem}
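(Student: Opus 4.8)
The plan is to deduce the statement directly from the correspondence of Corollary \ref{Correspondence} together with the explicit computation of the invariant subspace recorded in \ref{UnicoSub(B)}. By Corollary \ref{Correspondence}, the (B)-invariant quartic surfaces are in bijection with the non-singular forms lying in the union of the maximal (B)-invariant subspaces of $\operatorname{Form}_{3,4}$, and likewise for (H). So the first step is to note that, applying Theorem \ref{PrincipalInvariantSubspaceTheorem} to the generators $F_1, F_2', F_3'$ of (B), the only maximal (B)-invariant subspace is the pencil $\operatorname{B}_1 = \langle g_0, g_1 \rangle$; hence the (B)-invariant quartics are exactly the members $\{\lambda_0 g_0 + \lambda_1 g_1 = 0\}$, $[\lambda_0:\lambda_1]\in\PP^1$. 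Since (B) is a subgroup of (H) by Diagram \ref{fig:(B) y (H)}, every (H)-invariant form is a fortiori (B)-invariant, so the maximal (H)-invariant subspaces are contained in $\operatorname{B}_1$; the computation $\operatorname{H}_1 = \operatorname{B}_1$ of \ref{UnicoSub(B)} shows the containment is an equality. Thus both groups share the same pencil, which proves the first assertion.

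Next I would pin down the singular members. By the non-singularity criterion recorded in \ref{UnicoSub(B)}, the member $\lambda_0 g_0 + \lambda_1 g_1$ is smooth if and only if
\[
P(\lambda_0,\lambda_1) := -20\lambda_1\lambda_0^4 + 107\sqrt{15}\,\lambda_1^2\lambda_0^3 + 1140\lambda_1^3\lambda_0^2 + 180\sqrt{15}\,\lambda_1^4\lambda_0 \neq 0,
\]
so the singular fibers are precisely the zeros of $P$ in $\PP^1_{\lambda_0,\lambda_1}$. I would then factor
\[
P(\lambda_0,\lambda_1) = \lambda_0\lambda_1\left(-20\lambda_0^3 + 107\sqrt{15}\,\lambda_0^2\lambda_1 + 1140\lambda_0\lambda_1^2 + 180\sqrt{15}\,\lambda_1^3\right),
\]
which immediately yields the two points $[1:0]$ and $[0:1]$ (the members $g_0$ and $g_1$ themselves, both singular) and reduces the rest of the singular locus to the three roots of the cubic factor. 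A direct substitution verifies that $[\sqrt{15}:-4]$, $[2\sqrt{15}:-5]$, and $[6\sqrt{15}:1]$ each annihilate the cubic; since these three, together with $[1:0]$ and $[0:1]$, are pairwise distinct and $\deg P = 5$, they exhaust the zero set of $P$. This produces exactly the five singular fibers claimed, and hence the smooth members are their complement in the pencil.

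The one point genuinely requiring care within this argument is reconciling the low degree of $P$ with the discriminant proper: $\operatorname{Disc}(\lambda_0 g_0 + \lambda_1 g_1)$ is a form of degree $(n+1)(d-1)^n = 4\cdot 3^3 = 108$ in $(\lambda_0,\lambda_1)$, so a priori its vanishing could cut out more than $P=0$. The resolution is that \ref{UnicoSub(B)} provides an \emph{if and only if} criterion, so $\operatorname{Disc}$ and $P$ have the same reduced zero locus and the five points above are genuinely the complete list; in particular one need not compute the full degree-$108$ discriminant nor track its multiplicities. The genuinely hard work — producing the basis $g_0,g_1$ and establishing the degree-$5$ non-singularity criterion — is already absorbed into Section \ref{QuarticInvariants}, so the main obstacle lies upstream; what remains for the present theorem is the elementary factorization and root verification sketched above.
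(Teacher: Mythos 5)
Your proposal is correct and follows essentially the same route as the paper, which simply cites the computations of Section \ref{QuarticInvariants} (the pencil $\operatorname{B}_1=\operatorname{H}_1=\langle g_0,g_1\rangle$ and the degree-$5$ non-singularity criterion) as a straightforward consequence. Your explicit factorization $P=\lambda_0\lambda_1\cdot(\text{cubic})$ and the verification that $[\sqrt{15}:-4]$, $[2\sqrt{15}:-5]$, $[6\sqrt{15}:1]$ kill the cubic factor are accurate and merely make explicit what the paper leaves implicit.
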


\begin{proof}
It is a straightforward  consequence by the results presented in Section \ref{QuarticInvariants}. 
\end{proof}

\begin{corollary}
The projective automorphism group of the smooth quartic surface $\{ k = 0 \}$ is the group $(19^\circ)$. 
Moreover, $\operatorname{PAut}\{ k = 0 \} \cong \mathbb{Z}_2^4.\mathfrak{S}_5$, and $\{k = 0\}$ 
is projectively equivalent to  $$\{h_{12} = x_0^4 + x_1^4 + x_2^4 + x_3^4 + 12 x_0 x_1 x_2 x_3=0\}.$$

Note that, $h_{12}$ is the quartic form which Burnside conjectured to be the most symmetric smooth quartic surface. 
\end{corollary}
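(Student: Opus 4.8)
The plan is to prove the three assertions separately: the group equality $\operatorname{PAut}\{k=0\}=(19^\circ)$, the isomorphism $\operatorname{PAut}\{k=0\}\cong\ZZ_2^4.\mathfrak{S}_5$, and the projective equivalence of $\{k=0\}$ with the Burnside quartic $\{h_{12}=0\}$.

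First I would argue that $\operatorname{PAut}\{k=0\}$ is a finite primitive subgroup of $\PGL_4(\CC)$. Finiteness is guaranteed by the theorem of Matsumura and Monsky \cite{matsumura1963automorphisms} recalled in the introduction, since $\{k=0\}$ is a smooth quartic. By the theorem for Diagram~\ref{fig:13° - 21°} we already have $(19^\circ)\subseteq\operatorname{PAut}\{k=0\}$, and $(19^\circ)$ is primitive. Primitivity passes upward: if $\Gamma$ were a $\operatorname{PAut}\{k=0\}$-decomposition of $\CC^4$, then, as $(19^\circ)$ is contained in $\operatorname{PAut}\{k=0\}$, the very same $\Gamma$ would satisfy the three axioms of a $(19^\circ)$-decomposition, contradicting the primitivity of $(19^\circ)$. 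Hence $\operatorname{PAut}\{k=0\}$ is finite and primitive, so by Blichfeldt's classification it is conjugate to one of the groups displayed in Section~\ref{Diagrams}.

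Next I would identify it. Since $\operatorname{PAut}\{k=0\}$ contains $(19^\circ)$, its order is a multiple of $16\cdot120=1920$; in particular the representations of $\mathfrak{A}_5$, $\mathfrak{S}_5$, and $\operatorname{PSL}_2(\mathbb{F}_7)$, whose orders are at most $168$, are excluded as candidates. By Theorem~A the remaining primitive groups that fix a smooth quartic all lie in Diagram~\ref{fig:13° - 21°}, and among these the only ones fixing $\{k=0\}$ are $(13^\circ),(14^\circ),(15^\circ),(17^\circ)$, and $(19^\circ)$, each of which is a subgroup of $(19^\circ)$. As $\operatorname{PAut}\{k=0\}$ fixes $\{k=0\}$ and contains $(19^\circ)$, we conclude $(19^\circ)\subseteq\operatorname{PAut}\{k=0\}\subseteq(19^\circ)$, that is $\operatorname{PAut}\{k=0\}=(19^\circ)$. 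The isomorphism $\operatorname{PAut}\{k=0\}\cong\ZZ_2^4.\mathfrak{S}_5$ is then the identification of $(19^\circ)$ recorded in Section~\ref{PrimitiveGroups} following \cite{Ivan2019}, and the order is $1920$.

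For the projective equivalence I would proceed explicitly: exhibit a matrix $A\in\GL_4(\CC)$ and check by direct substitution that $k(Ax)=c\,h_{12}(x)$ for some $c\in\CC^*$. The natural candidate is the normalized character table of $\ZZ_2\times\ZZ_2$ (a scaled Hadamard matrix), which trades the pure fourth powers against the mixed terms and collapses the combination of $x_i^2x_j^2$ occurring in $k$ into the single monomial $12\,x_0x_1x_2x_3$. Conceptually, the same conclusion follows from Bonnaf\'e and Sarti \cite{bonnafe2019k3}: the group $(19^\circ)\cong\ZZ_2^4.\mathfrak{S}_5$ is an index-two extension of the Mathieu group $M_{20}\cong\ZZ_2^4\rtimes\mathfrak{A}_5$ of order $1920$, so $\{k=0\}$ carries precisely the action they classify, and the unique such smooth quartic is the Burnside quartic (the Kummer example of \cite[Remark~3.3]{bonnafe2019k3} not being a quartic). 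The main obstacle is exactly this last step: the explicit route reduces to a routine but opaque substitution, while the Bonnaf\'e--Sarti route requires checking that the equivariant isomorphism of the underlying K3 surfaces respects the degree-four polarization, hence descends to an honest element of $\PGL_4(\CC)$.
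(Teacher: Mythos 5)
Your identification of $\operatorname{PAut}\{k=0\}$ with $(19^\circ)$ follows essentially the paper's route: finiteness from Matsumura--Monsky, primitivity inherited from the subgroup $(19^\circ)$, Blichfeldt's classification, and the maximality of the order $1920$ among primitive groups fixing a smooth quartic. One small slip: Blichfeldt only gives you that $\operatorname{PAut}\{k=0\}$ is \emph{conjugate} to a group in the diagrams, not equal to one of them, so the step ``the only listed groups fixing $\{k=0\}$ are subgroups of $(19^\circ)$, hence $\operatorname{PAut}\{k=0\}\subseteq(19^\circ)$'' is not licit as written; the clean finish (and the paper's) is that the conjugate group also fixes a smooth quartic, hence has order at most $1920$, so $|\operatorname{PAut}\{k=0\}|=1920=|(19^\circ)|$ and the inclusion $(19^\circ)\subseteq\operatorname{PAut}\{k=0\}$ is an equality.

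The projective equivalence with $\{h_{12}=0\}$ is where you have a genuine gap, and you flag it yourself as ``the main obstacle.'' Neither of your two routes is completed. The explicit route is not just unverified but your candidate does not work as stated: the scaled Hadamard substitution sends $\sum x_i^4$ to a multiple of $\sum x_i^4+6\sum_{j<k}x_j^2x_k^2+24x_0x_1x_2x_3$ with all six cross terms carrying the \emph{same} sign, whereas $k$ has the signed combination $x_0^2x_1^2-x_1^2x_2^2+\cdots$; and no diagonal rescaling by fourth roots of unity can flip exactly the two terms $x_1^2x_2^2$ and $x_0^2x_3^2$ (such rescalings flip the cross terms along a cut of $K_4$, which has size $0$, $3$ or $4$), so a genuinely different matrix is required. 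The Bonnaf\'e--Sarti route leaves open precisely the polarization issue you name. The paper closes this step with neither an explicit matrix nor lattice theory: by \cite[Section 5]{faina2016s5}, $\operatorname{PAut}(h_{12})$ has order $1920$ and contains a conjugate of the primitive group $(A)$, hence is primitive and, by Blichfeldt, conjugate to $(19^\circ)=\operatorname{PAut}(k)$ via some $(T)\in\PGL_4(\CC)$; then $(T)\{h_{12}=0\}$ is a smooth $(19^\circ)$-invariant quartic, and the computation in \ref{QuarticInvariantby(19°)} shows that $\{k=0\}$ is the \emph{unique} such surface, forcing $(T)\{h_{12}=0\}=\{k=0\}$. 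That uniqueness statement is the idea your proposal is missing.
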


\begin{proof}
By \ref{QuarticInvariantby(19°)}, we have that $\operatorname{PAut}\{ k = 0 \} = \operatorname{PAut}(k) \supset (19^\circ)$. This implies $\operatorname{PAut}(k)$ is a primitive group of $\PGL_4(\CC)$, which it is finite by \cite[Theorem 1]{matsumura1963automorphisms}. By Blichfeltd's classification of finite primitive groups of $\PGL_4(\CC)$ we have $\operatorname{PAut}(k)$ is conjugate to some unique group in the diagrams of Section \ref{Diagrams}, namely $G$. The order of $G$ is $|\operatorname{PAut}(k)| \geq |(19^\circ)| = 1920$ and by Section \ref{QuarticInvariants} we have (19°) is the group of maximum order between the groups of diagrams in Section \ref{Diagrams} which fix some smooth quartic surface, thus $|\operatorname{PAut}(k)|=|G| = 1920$. Therefore $\operatorname{PAut}(k) \supset (19^\circ)$ have the same order and they must to be equal. Therefore, $\operatorname{PAut}\{ k = 0 \}=\operatorname{PAut}(k) = (19^\circ)$.

Now we are going to prove that $\{ k = 0 \}$ and $\{ h_{12} = 0 \}$ are projectively equivalent. By \cite[Section 5]{faina2016s5} we have $h_{12}$ is non-singular, $|\operatorname{PAut}\{ h_{12} = 0 \}| = |\operatorname{PAut}(h_{12})|= 1920$ and $\operatorname{PAut}(h_{12})$ has a subgroup conjugate to the primitive group $(A)$. Thus, $\operatorname{PAut}(h_{12})$ is a primitive finite group of $\PGL_4(\CC)$ which is conjugate of $(19^\circ)=\operatorname{PAut}(k)$ by Blichfeldt's classification. Lets take $(T) \in \PGL_4(\CC)$ such that $\operatorname{PAut}(k)(T) = (T)\operatorname{PAut}(h_{12})$. It is clear $(T)\{ h_{12} = 0 \}$ is invariant by $\operatorname{PAut}(k) = (19^\circ)$, and by \ref{QuarticInvariantby(19°)} must be the case $(T)\{h_{12} = 0 \}$ and $\{k = 0 \}$ are equals. Therefore, $\{h_{12} = 0 \}$ and $\{ k = 0\}$ are projective equivalent. 
\end{proof}
By the previous  result, we retrieve a noticeable outcome. In fact, we get  that the surface $\{ h_{12} = 0 \}$ is the most symmetric quartic smooth surface, up to projective equivalence. The previous proof also proves Corollary B in the introduction. For completeness we present the smooth quartic invariant surfaces by the non-primitive representations of the groups in Diagram \ref{fig:(A), (C) - (F), (G) y (K)}.

\begin{theorem}[Non-Primitive Groups in Diagram \ref{fig:(A), (C) - (F), (G) y (K)}]

The group $\operatorname{PSL}_2(\mathbb{F}_7)$ seen in $\operatorname{PGL}_4(\mathbb{C})$ as the non-primitive group $(P)$ is the only that fix smooth quartic surfaces. Moreover, these surfaces are in the pencil of quartics 
    $$\{\lambda_1(x_0^4)+\lambda_1(x_2 x_1^3+x_3^3 x_1+x_2^3 x_3)=0\},$$
    with singular fibers at the point $[1:0],[0:1]\in \mathbb{P}^1_{\lambda_0,\lambda_1}$. 
\end{theorem}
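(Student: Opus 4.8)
The plan is to read the statement off directly from the invariant-subspace computations of Section \ref{QuarticInvariants}, together with the classification of non-primitive representations recorded in Section \ref{Snpg}. Recall from there that the complete list of non-primitive faithful representations in $\PGL_4(\CC)$ of the groups occurring in Diagram \ref{fig:(A), (C) - (F), (G) y (K)} is $(Q_1),(Q_4),(Q_5)$ for $\mathfrak{A}_5$, $(R_3)$ for $\mathfrak{S}_5$, $(Q_7)$ for $\mathfrak{A}_6$, and $(P)$ for $\operatorname{PSL}_2(\mathbb{F}_7)$. By Corollary \ref{Correspondence}, every smooth $G$-invariant quartic is a non-singular form lying in the union of the maximal $G$-invariant subspaces of $\operatorname{Form}_{3,4}$, so it suffices to inspect each of these finitely many subspaces.

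First I would rule out every representation except $(P)$. For $(Q_5)$, $(R_3)$ and $(Q_7)$ there is no non-trivial invariant quartic subspace at all, so there is nothing to check. For $(Q_1)$ the maximal invariant subspace $\operatorname{Q}_{11}$ consists of forms of degree at most $2$ in the variable $x_2$; in particular no monomial $x_2^3x_j$ occurs, so every member is singular by the final assertion of Lemma \ref{MSC}. For $(Q_4)$ the unique invariant form equals $-\tfrac12(x_0x_2-x_1x_3)^2$, a perfect square, hence singular. Thus none of these five representations fixes a smooth quartic surface.

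It then remains to treat $(P)$ and to describe its smooth invariant quartics. The maximal invariant subspace is the pencil $\operatorname{P}_1$ spanned by $p_0=x_0^4$ and $p_1=x_2x_1^3+x_3^3x_1+x_2^3x_3$ computed in \ref{P}. For the non-singularity criterion I would reproduce the argument given there: since $x_0$ occurs only in $p_0$ while each of $x_1,x_2,x_3$ occurs only in $p_1$, Lemma \ref{MSC} forces both $\lambda_0\neq0$ and $\lambda_1\neq0$; conversely, when $\lambda_0\lambda_1\neq0$ any singular point of $\lambda_0p_0+\lambda_1p_1$ would project to a singular point of the plane quartic $\{p_1=0\}\subset\PP^2$, which is the smooth Klein curve of Remark \ref{klein}, a contradiction. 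Hence $\lambda_0p_0+\lambda_1p_1$ is non-singular precisely when $\lambda_0\lambda_1\neq0$, so the smooth members are exactly those of the pencil with the two fibers over $[1:0]$ and $[0:1]$ removed, giving the asserted singular fibers.

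The computations are routine once the representations are fixed; the only genuine point is the smoothness analysis for $(P)$, which I expect to be the main step and which is settled cleanly by reducing to the known smoothness of the Klein quartic rather than by computing the degree-$108$ discriminant directly.
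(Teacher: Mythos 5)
Your proposal is correct and follows essentially the same route as the paper: the paper's proof is simply a pointer to the computations in Section \ref{QuarticInvariants}, and you reconstruct exactly those — ruling out $(Q_1)$ via the last assertion of Lemma \ref{MSC} (no monomial $x_2^3x_j$ appears), $(Q_4)$ as a perfect square, $(Q_5),(R_3),(Q_7)$ as having no invariant quartics, and handling $(P)$ by the same two-step argument (Lemma \ref{MSC} for necessity of $\lambda_0\lambda_1\neq0$, smoothness of the Klein quartic for sufficiency) given in Subsection \ref{P}. No substantive difference.
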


\begin{proof}
See Section \ref{QuarticInvariants}, Subsection  \ref{P}.
\end{proof}
\newpage
\appendix

\section{Algorithm: Invariant Forms of degree d}\label{Program}


{\scriptsize
\begin{algorithm}\caption{$G$-invariant forms of degree $d$}
\SetKwInput{KwInput}{Input}                
\SetKwInput{KwOutput}{Output}              
\DontPrintSemicolon
  
  \KwInput{$n,d,m,(\sigma_i)_{1 \leq i \leq m},(A_i)_{1 \leq i \leq m}$ as appears in Section \ref{Invariant Subspaces}} 
  \KwOutput{Basis for each $\ker T_k,\, k \in K$}

  \SetKwFunction{FPolinomio}{GeneralPolynomial}
  \SetKwFunction{FLinearB}{LinearB}
  \SetKwFunction{FMatrixRepresentationB}{MatrixRepresentationB}
  \SetKwFunction{FMatrixRepresentationT}{MatrixRepresentationT}
  \SetKwFunction{FBasisT}{BasisT}
  
  \Begin{

  $C=\text{Array}(c,\delta)$\;
  $X=\text{Array}(x,n+1)$\;
  $E=\text{Exponents}(d,n+1)$\;
  $\delta = \text{Binomial}(n+d,n)$\;
  \SetKwProg{Fn}{Def}{:}{}
  \Fn{\FPolinomio{$Y, X$}}{
    \KwRet $\sum_{1 \leq i \leq \delta} Y_i \cdot \text{Monomial}(X,E_i)$\;
  }
  \SetKwProg{Fn}{Def}{:}{}
  \Fn{\FLinearB{$U,r,Y,X$}}{
    \KwRet $\text{GeneralPolyomial}(Y, U \cdot X) - r \cdot  \text{GeneralPolynomial}(Y,X)$\;
  } \;
  \SetKwProg{Fn}{Def}{:}{}
  \Fn{\FMatrixRepresentationB{$U,r$}}{
    $M = \text{ConstantArray}(0,\{\delta,\delta\})$\;
    $p = \text{LinearB}(U,r,A,X)$\;
    \For{$1 \leq i \leq \delta$}{
        $b=\text{Coefficient}(p,x_i)$\;
        \For{$1 \leq j \leq \delta$}{
            $M_{i,j} = \text{Coefficient}(b,c_j)$\;
        }
    }
    \KwRet $M$
  } \;
  \SetKwProg{Fn}{Def}{:}{}
  \Fn{\FMatrixRepresentationT{$U,\tau,m$}}{
    $P = \text{ConstantArray}(0,\{m \delta, \delta\})$\;
    \For{$1 \leq k \leq m$}{
        $M = \text{MatrixRepresentationB}(U_k,\tau_k)$\;
        \For{$1 \leq i \leq \delta$}{
            \For{$1 \leq j \leq \delta$}{
                $P_{i+(k-1)\delta,\, j} = M_{i,j}$
            }
        }
    }
    \KwRet $P$
  } \;
  \SetKwProg{Fn}{Def}{:}{}
  \Fn{\FBasisT{$U,\tau,m$}}{
    $P = \text{MatrixRepresentationT}(U,\tau,m)$\;
    $Q= \text{NullSpace}(P)$\;
    $q = \text{Lenght}(Q)$\;
    $R = \text{ConstantArray}(0,q)$\;
    \For{$1 \leq i \leq q$}{
        $R_i = \text{GeneralPolynomial}(Q_i,X)$\;
    }
    \KwRet $R$
  } \;
  Define $K$ as appears in Section \ref{Invariant Subspaces}. \;
  $l = \text{Length}(K)$\;
  $\text{G-InvariantForms} = \text{ConstantArray}(0,l)$\;
  \For{$1 \leq i \leq l$}{
    $\text{G-InvariantForms}_i = \text{BasisT}((A_j)_{1 \leq j \leq m}, (K_{i,j})_{1 \leq j \leq m},m)$\;
  }
  \KwRet \text{G-InvariantForms}
  
  }
\end{algorithm}
}

\pagebreak

\bibliographystyle{alpha}

\bibliography{main}

\begin{thebibliography}{GALM20}

\bibitem[Bli18]{FiniteCollineationGroups}
H.~F. Blichfeldt.
\newblock Blichfeldt's finite collineation groups.
\newblock {\em Bull. Amer. Math. Soc.}, 24(10):484--487, 1918.

\bibitem[BS21]{bonnafe2019k3}
C\'{e}dric Bonnaf\'{e} and Alessandra Sarti.
\newblock K3 surfaces with maximal finite automorphism groups containing
  {$M_{20}$}.
\newblock {\em Ann. Inst. Fourier (Grenoble)}, 71(2):711--730, 2021.

\bibitem[Bur55]{burnside1911theory}
W.~Burnside.
\newblock {\em Theory of groups of finite order}.
\newblock Dover Publications, Inc., New York, 1955.
\newblock 2d ed.

\bibitem[CLO05]{Cox05}
David~A. Cox, John Little, and Donal O'Shea.
\newblock {\em Using algebraic geometry}, volume 185 of {\em Graduate Texts in
  Mathematics}.
\newblock Springer, New York, second edition, 2005.

\bibitem[CS15]{cheltsov2015cremona}
Ivan Cheltsov and Constantin Shramov.
\newblock {\em Cremona groups and the icosahedron}.
\newblock CRC Press, 2015.

\bibitem[CS19]{Ivan2019}
Ivan Cheltsov and Constantin Shramov.
\newblock Finite collineation groups and birational rigidity.
\newblock {\em Selecta Math. (N.S.)}, 25(5):Paper No. 71, 68, 2019.

\bibitem[DD19]{dolgachev2019automorphisms}
I.~Dolgachev and A.~Duncan.
\newblock Automorphisms of cubic surfaces in positive characteristic.
\newblock {\em Izv. Ross. Akad. Nauk Ser. Mat.}, 83(3):15--92, 2019.

\bibitem[Deb17]{debarre2017geometry}
Olivier Debarre.
\newblock On the geometry of hypersurfaces of low degrees in the projective
  space.
\newblock In {\em Algebraic geometry and number theory}, volume 321 of {\em
  Progr. Math.}, pages 55--90. Birkh\"{a}user/Springer, Cham, 2017.

\bibitem[Dol18]{dolgachev2018quartic}
Igor~V. Dolgachev.
\newblock Quartic surfaces with icosahedral symmetry.
\newblock {\em Adv. Geom.}, 18(1):119--132, 2018.

\bibitem[Dol19]{dolgachev2019brief}
Igor Dolgachev.
\newblock A brief introduction to automorphisms of algebraic varieties. talca.
  chile. 2019.
\newblock 2019.

\bibitem[Ekl18]{eklund2018curves}
David Eklund.
\newblock Curves on {H}eisenberg invariant quartic surfaces in projective
  3-space.
\newblock {\em Eur. J. Math.}, 4(3):931--952, 2018.

\bibitem[FMPK16]{faina2016s5}
Giorgio Faina, Stefano Marcugini, Fernanda Pambianco, and Hitoshi Kaneta.
\newblock S5-invariant nonsingular quartic surfaces.
\newblock {\em arXiv preprint arXiv:1605.08720}, 2016.

\bibitem[GALM20]{gonzalez2020liftability}
V{\'\i}ctor Gonz{\'a}lez-Aguilera, Alvaro Liendo, and Pedro Montero.
\newblock On the liftability of the automorphism group of smooth hypersurfaces
  of the projective space.
\newblock {\em arXiv preprint arXiv:2004.12455}, 2020.

\bibitem[GKZ94]{Gelfand94}
Israel~M. Gelfand, Mikhail~M. Kapranov, and Andrei~V. Zelevinsky.
\newblock {\em Discriminants, Resultants and Multidimensional Determinants}.
\newblock Birkhäuser, 1994.

\bibitem[Inc]{Mathematica}
Wolfram~Research{,} Inc.
\newblock Mathematica, {V}ersion 13.1.
\newblock Champaign, IL, 2022.

\bibitem[Kle78]{klein1878ueber}
Felix Klein.
\newblock Ueber die {T}ransformation siebenter {O}rdnung der elliptischen
  {F}unctionen.
\newblock {\em Math. Ann.}, 14(3):428--471, 1878.

\bibitem[Kol19a]{kollar2019algebraic}
J\'{a}nos Koll\'{a}r.
\newblock Algebraic hypersurfaces.
\newblock {\em Bull. Amer. Math. Soc. (N.S.)}, 56(4):543--568, 2019.

\bibitem[Kol19b]{kollar2019rigidity}
J\'{a}nos Koll\'{a}r.
\newblock The rigidity theorem of
  {F}ano-{S}egre-{I}skovskikh-{M}anin-{P}ukhlikov-{C}orti-{C}heltsov--de
  {F}ernex-{E}in-{M}usta\c{t}\u{a}-{Z}huang.
\newblock In {\em Birational geometry of hypersurfaces}, volume~26 of {\em
  Lect. Notes Unione Mat. Ital.}, pages 129--164. Springer, Cham, [2019]
  \copyright 2019.

\bibitem[Kon99]{kondo1999maximum}
Shigeyuki Kond\={o}.
\newblock The maximum order of finite groups of automorphisms of {$K3$}
  surfaces.
\newblock {\em Amer. J. Math.}, 121(6):1245--1252, 1999.

\bibitem[LZ22]{laza2022automorphisms}
Radu Laza and Zhiwei Zheng.
\newblock Automorphisms and periods of cubic fourfolds.
\newblock {\em Math. Z.}, 300(2):1455--1507, 2022.

\bibitem[Mas98]{maschke1898bestimmung}
H.~Maschke.
\newblock Bestimmung aller tern\"{a}ren und quatern\"{a}ren
  {C}ollineationsgruppen, welche mit symmetrischen und alternirenden
  {B}uchstabenvertauschungsgruppen holoedrisch isomorph sind.
\newblock {\em Math. Ann.}, 51(2):253--298, 1898.

\bibitem[Mir95]{miranda1995algebraic}
Rick Miranda.
\newblock {\em Algebraic curves and {R}iemann surfaces}, volume~5 of {\em
  Graduate Studies in Mathematics}.
\newblock American Mathematical Society, Providence, RI, 1995.

\bibitem[MM64]{matsumura1963automorphisms}
Hideyuki Matsumura and Paul Monsky.
\newblock On the automorphisms of hypersurfaces.
\newblock {\em J. Math. Kyoto Univ.}, 3:347--361, 1963/64.

\bibitem[MPK19]{faina2019Zp}
Stefano Marcugini, Fernanda Pambianco, and Hitoshi Kaneta.
\newblock Projective automorphism groups of nonsingular quartic surfaces.
\newblock {\em arXiv preprint arXiv :1611.10101v4}, 2019.

\bibitem[OY19]{oguiso2015automorphism}
Keiji Oguiso and Xun Yu.
\newblock Automorphism groups of smooth quintic threefolds.
\newblock {\em Asian J. Math.}, 23(2):201--256, 2019.

\bibitem[vdW49]{vanWaerdenII}
B.~L. van~der Waerden.
\newblock {\em Modern {A}lgebra. {V}ol. {I}}.
\newblock Frederick Ungar Publishing Co., New York, N. Y., 1949.
\newblock Translated from the second revised German edition by Fred Blum, With
  revisions and additions by the author.

\bibitem[WY20]{wei2020automorphism}
Li~Wei and Xun Yu.
\newblock Automorphism groups of smooth cubic threefolds.
\newblock {\em J. Math. Soc. Japan}, 72(4):1327--1343, 2020.

\end{thebibliography}

\end{document}